\numberwithin{equation}{section}
\newtheorem{theorem}{Theorem}
\newtheorem{lemma}{Lemma}
\newtheorem{corollary}{Corollary}
\begin{document}

\title{Ulam's History-dependent Random Adding Process} 
\author
{{\sc \normalsize  By PETER CLIFFORD} \vspace{0.5ex} \\
{\em \normalsize Jesus College, Oxford, OX1 3DW, UK} \vspace{1.5ex} \\
{\sc \normalsize  and DAVID STIRZAKER} \vspace{0.5ex} \\
{\em \normalsize St.\ John's College, Oxford,  OX1 3JP, UK}}

\date{\vspace{-1ex}}

\maketitle
 
\begin{abstract}
Physical systems that have no memory have been very effectively modelled by Markov processes in their several forms.  But many real-world systems clearly do not have this property, including e.g.\ foraging animals, stochastic learning machines, and quantum dynamic processes.  There is therefore much interest in history-dependent processes whose evolution depends on all or part of their prior sample paths.  

Here we consider a large family of history-dependent growth processes, the simplest of which was first defined by Stanislaw Ulam in this recursion for a random sequence of positive integers: 
$X_{n+1} = X_{U(n)} + X_{V (n)}$, $n \geqslant r$ where $U(n)$ and $V (n)$ are independently and uniformly distributed on $\{1, \dots , n\}$, and the initial sequence, $X_1 = x_1, \dots , X_r = x_r$, is fixed. 
We consider the asymptotic properties of this sequence as $n \to \infty$, showing, for example, that $n^{-2} \sum_{k=1}^n X_k$ converges to a non-degenerate random variable.  We also consider the moments and auto-covariance of the process, showing, for example, that when the initial condition is $x_1 =1$ with $r =1$, then $\lim_{n\to \infty} n^{-2} E X^2_n = (2 \pi)^{-1} \sinh(\pi)$; and that for large $m < n$, we have $(m n)^{-1} E X_m X_n \doteq (3 \pi)^{-1} \sinh(\pi)$.  

We further consider new random adding processes where changes occur independently at discrete times with probability $p$, or where changes occur continuously at jump times of an independent Poisson process.  The processes are shown to have properties similar to those of the discrete time process with $p=1$, and to be readily generalised to a wider range of related sequences.\\

\noindent{\em Keywords:} Random sequences; martingales; asymptotic analysis; history-dependent processes.

\end{abstract}

\section{Introduction}

Much of the theory of random processes has been driven by the study of evolving physical systems that clearly either have no memory, or can be assumed forgetful to a good approximation.  This leads naturally to models that require the Markov property in one of its forms.  However, many real-world systems exhibit marked long-range dependence, together with the phenomenon of `lock-in', (or `self-organization'), and other manifestations of non-ergodic behaviour.  The customary assumption of the Markov property, though extraordinarily convenient, limits the extent to which such properties can feature in the development of the process. A natural next step therefore is to abandon that restriction, and allow the short-term development of the process to have an explicit dependence on any part, or all, of its history.   Applications include:  history dependent (HD) quantum dynamics \citep{BL},
HD dynamic random utility \citep{FIS},
HD predator-prey models \citep{GE},
HD materials \citep{MBE},
HD social networks \citep{CBHT,PS}, 
and so on.

Models for describing such phenomena fall into several types, and for a broad survey see \citet{P}.
Among the various types of HD random processes, perhaps the most developed strand comprises the HD random walks (RW); these are known in general as self-exciting RW, or self-reinforcing RW, or self-interacting RW, or self-avoiding RW, and so on.  Further sub-divisions arise according as the walk may be edge-reinforced, vertex reinforced, step reinforced, etc.  See for example \citet{KM,B}.


Notable special cases include: first the Shark Random Swim \citep{BU}; second 
the Elephant Random Walk (ERW), first introduced by \citet{SGT}, 
and see also \citet{BE}; and third, the Reverting Random Walk \citep{BR,CS1}.

The field now extends to HD Brownian motion and HD L\'{e}vy processes \citep{BERT},
and HD spatio-temporal processes \citep{R}.

Note that in many applications, especially the social sciences, such processes are called path-dependent, but this term is also used in stochastic analysis to denote the solution of a stochastic differential equation whose coefficients depend on the paths of another process, such as the Wiener process.

Here, we consider a type of HD growth process, first suggested by Stanislaw Ulam where the next step depends on the entire past of the process.  Specifically, every member of the sequence of values is the sum of two values chosen from the previous history.

This type of sequence was studied in discrete time by \citet{BSU}, thus
\begin{equation}\label{eq:BSU}
X_{n+1} = X_{U(n)} + X_{V(n)}, \ \ n \geqslant 2,
\end{equation}
where $X_1=x_1$ and $X_2=x_2$ are given, and $(U(n),V(n); n \geqslant 1)$ comprise a sequence of independent random variables such that for given $n$, $U(n)$ and $V(n)$ are each uniformly distributed on $\{1,\dots,n\}$.  They note that $E X_n = \frac13 (x_1+x_2)n,$ for  $n\geqslant 3,$ and they conjectured from computer simulations that $E X^2_n$ grows quadratically with $n$ as $n \to \infty$. (They made 5000 simulations each with 100 steps.) They also note that since the process $(X_n; n \geqslant 1)$ does not enjoy the Markov property, or similar simplifications, it is not straightforward to analyse.

The sequence defined in \eqref{eq:BSU} was later considered by \citet{BNK}, with the initial condition $X_1=x_1.$ They note that in this case $E X_n = n x_1, n \geqslant 1.$  On the basis of further simulations ($10^8$ realisations, each of $1000$ steps), they conjectured that $E X^2_n$ grows quadratically and also that $E X^3_n$ grows with the cube of $n$.  

We shall verify these conjectures, and identify a martingale that further elucidates the behaviour of $(X_n; n \geqslant 1)$.  We will then consider a new randomised adding sequence, in which changes occur randomly with probability $p$.  Finally we consider a related adding process in continuous time, in which changes are regulated by a Poisson process.  Such processes have previously been introduced in the context of history-dependent growth processes \citep{CS}.  The process is shown to reproduce, in continuous time, the essential properties of Ulam's discrete time sequence.  Furthermore, similar analyses can be made of many more general processes, which we briefly outline.

\section{The adding process in discrete time}\label{sec:2}
Consider the process defined in \eqref{eq:BSU} with initial fixed sequence $(X_1=x_1,\dots,X_r=x_r)$ and let $s_r = \sum_{k=1}^r x_k$ and $t_r=\sum_{k=1}^r x_k^2$.   Denote the mean of $X_n$ by $m_n = E X_n$.  By conditional expectation,  
$$m_{n+1} = \frac{2}{n} \sum_{k=1}^n m_k, \quad n \geqslant r, $$
and an easy induction gives 
\begin{equation} \label{eq:martmean}
m_n = \frac{2 n}{r(r+1)} \sum_{k=1}^r x_k, \quad n > r.
\end{equation}
For the second moment we have this:

\begin{theorem}  \label{thm:qlimit}
\begin{equation} \label{eq:qlimit} 
\frac{E X^2_n}{n^2} \to  K(x_1,\dots,x_r),\quad \text{as $n \to \infty$},
\end{equation} 
 where
\begin{equation} \label{eq:K}
K(x_1,\dots,x_r) 
= \frac{\sinh(\pi)}{2\pi W_r} \left\{2 (r+1)\left(\frac{t_r}{r} + \frac{s_r^2}{r^2}\right) -(r+2)x_r^2\right\},
\end{equation}
and
\begin{equation}
W_r=\frac12 \left\{(r+1)^2+1\right\} \prod_{k=1}^r\left(1+\frac{1}{k^2}\right). 
\end{equation}
\end{theorem}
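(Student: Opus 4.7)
The plan is to reduce the second-moment calculation to a solvable two-dimensional linear recursion and then read off the limit from a careful asymptotic analysis of its propagator. Since $U(n),V(n)$ are independent of $\mathcal{F}_n=\sigma(X_1,\dots,X_n)$ and uniform on $\{1,\dots,n\}$, direct conditioning gives $E[X_{n+1}|\mathcal{F}_n]=2S_n/n$ and $E[X_{n+1}^2|\mathcal{F}_n]=2T_n/n+2S_n^2/n^2$, where $S_n=\sum_1^n X_k$ and $T_n=\sum_1^n X_k^2$. Expanding $S_{n+1}^2=S_n^2+2S_nX_{n+1}+X_{n+1}^2$ and $T_{n+1}=T_n+X_{n+1}^2$ and taking expectations yields the coupled system
\begin{equation*}
n^2 p_{n+1}=(n^2+4n+2)\,p_n+2n\,q_n,\qquad n^2 q_{n+1}=n(n+2)\,q_n+2\,p_n,
\end{equation*}
for $p_n=ES_n^2$, $q_n=ET_n$, valid for $n\geqslant r$. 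Since $EX_n^2=q_n-q_{n-1}$, the theorem reduces to identifying $\lim_{n\to\infty}(q_n-q_{n-1})/n^2$.

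Next I would isolate a martingale that makes the convergence transparent. The identity $E[S_{n+1}|\mathcal{F}_n]=(n+2)S_n/n$ shows that $M_n:=S_n/(n(n+1))$ is an $(\mathcal{F}_n)$-martingale, with $EM_n=2s_r/(r(r+1))$ and conditional increments $E[(\Delta M_n)^2|\mathcal{F}_n]=2(nT_n-S_n^2)/[n^2(n+1)^2(n+2)^2]\geqslant 0$. A crude bound $p_n=O(n^4)$ obtained by iterating the recursion (or by Gronwall) shows that $EM_n^2$ is non-decreasing and bounded, so $M_n\to M_\infty$ in $L^2$ and a.s., and $p_n/(n(n+1))^2\to EM_\infty^2$. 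Feeding $p_n\sim P_\infty n^4$ back into the second equation yields $q_n\sim(2P_\infty/1)n^3$ and then $a_n:=EX_n^2\sim 6P_\infty n^2$, so the limit (\ref{eq:qlimit}) exists with $K=6P_\infty$, linked to a martingale limit we now have to evaluate.

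To extract the explicit form of $K$, I would solve the matrix recursion $\xi_{n+1}=A_n\xi_n$ with $\xi_n=(p_n,q_n)^T$ in closed form. The crucial identity is
\begin{equation*}
\det A_n=\frac{n^2+6n+10}{n^2}=\frac{(n+3+i)(n+3-i)}{n^2},
\end{equation*}
whose telescoping product, via $\Gamma$-function asymptotics, produces the Weierstrass factor $\prod_{k\geqslant 1}(1+1/k^2)=\sinh(\pi)/\pi$ in the limit; this is where both the $\sinh(\pi)/\pi$ and $W_r$ in (\ref{eq:K}) originate. Writing $\xi_n$ in a basis adapted to the leading eigendirection of $A_n$ (the $(n+2)^2/n^2$ mode, perturbed by a $1/k^2$-summable correction accounting for the subleading $(n+2)(n-1)/n^2$ mode), I project the initial vector $\xi_r=(s_r^2,t_r)^T$ onto the dominant left eigenvector, whose coefficients assemble into the linear combination of $t_r/r$, $s_r^2/r^2$, and $x_r^2$ inside the brace. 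Matching leading order with $p_n\sim Kn^4/6$ then yields the stated formula.

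The main obstacle is the last step: the two-dimensional recursion does not admit a constant-coefficient diagonalization, so the exact constant cannot be read from a single eigenvalue product. The tractable route is to identify a (complex) change of variables under which the propagator factors as a shift in the Gamma function times an $O(1/k^2)$-small correction, so that the infinite product of corrections collapses onto $\prod_{k>r}(1+1/k^2)$. Getting the correct finite prefactor $(2(r+1)(t_r/r+s_r^2/r^2)-(r+2)x_r^2)/W_r$, and in particular verifying the dependence on the last initial value $x_r$, will require careful bookkeeping of the transient behaviour on the interval $r\leqslant k\leqslant n$ before the asymptotic regime kicks in.
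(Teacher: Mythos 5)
Your setup is sound and closely parallels the paper's: the coupled recursions for $p_n=ES_n^2$ and for the sum of squares are correct, the martingale $M_n=S_n/(n(n+1))$ is the right object, and you have correctly located the source of $\sinh(\pi)$ --- your $\det A_n=((n+3)^2+1)/n^2$ is precisely the Casoratian ratio that the paper telescopes into $\prod_k(1+1/k^2)=\sinh(\pi)/\pi$. But the proof is not complete, and the missing step is exactly the one that makes the theorem computable. The paper does not diagonalize the $2\times 2$ propagator; it eliminates $p_n$ to obtain a single second-order linear recurrence for $q_n=EX_n^2$ itself, namely $(n+1)^2q_{n+2}-2(n+1)(n+2)q_{n+1}+\{(n+2)^2+1\}q_n=0$, observes \emph{by inspection} that $q_n^*=n+1$ is a particular solution, and then uses reduction of order: the second solution is $(n+1)\sum_k W_k/((k+1)(k+2))$ with $W_k$ the Casoratian. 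The constant $K$ then falls out of the two initial conditions $q_r=x_r^2$ and $q_{r+1}=2t_r/r+2s_r^2/r^2$. Your proposed "change of variables under which the propagator factors as a shift in the Gamma function times an $O(1/k^2)$ correction" is, in effect, a search for this explicit solution basis; without the observation that $n+1$ solves the scalar recurrence, the "careful bookkeeping" you defer is the entire difficulty, and you acknowledge you have not done it. Since the theorem's content is the explicit value of $K$, this is a genuine gap rather than a presentational one.

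Two smaller points on the existence part, which you route through the martingale (the paper does the reverse: it proves the limit first and then uses it to bound $EM_n^2$ in Lemma 1). First, the a priori bound $p_n=O(n^4)$ does not follow from iterating the $p$-recursion alone: using only $q_n\leqslant p_n$ gives $p_{n+1}\leqslant(1+6/n+O(n^{-2}))p_n$, hence $O(n^6)$. You need a joint induction with the coupled ansatz $p_n\leqslant Cn^4$, $q_n\leqslant 2Cn^3$ (which does close, since the cross terms balance at leading order), so state that explicitly. Second, do not pass from $q_n\sim 2P_\infty n^3$ to $EX_n^2=q_n-q_{n-1}\sim 6P_\infty n^2$ by differencing --- asymptotics of partial sums do not transfer to increments. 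Instead use the exact identity $EX_{n+1}^2=\tfrac{2}{n}ET_n+\tfrac{2}{n^2}ES_n^2$, which gives $6P_\infty n^2$ directly from the two established asymptotics; your final constant $K=6P_\infty$ is then consistent with the paper's \eqref{eq:alimit}.
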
  
\begin{proof}
Define $S_n = \sum_{k=1}^n X_k$ and let $p_n = E S_n^2$ and $q_n= E X^2_n$. By conditional expectation, for $n \geqslant r$,
\begin{equation} \label{eq:qn+1}
q_{n+1} = \frac{2}{n} \sum_{k=1}^n q_k + \frac{2}{n^2} p_n.
\end{equation}
Also by conditional expectation, 
\begin{equation} \label{eq:an+1}
p_{n+1} = E (S^2_n + 2 S_n X_{n+1} + X^2_{n+1}) = \frac{n+4}{n} p_n + q_{n+1}.
\end{equation}
Eliminating $p_n$, we have
\begin{equation} \label{eq:LRadding}
(n+1)^2 q_{n+2} - 2(n+1)(n+2)q_{n+1} + \{(n+2)^2+1\} q_n = 0,
\end{equation}
with initial conditions
\begin{equation} \label{eq:ICadding}
q_r = x_r^2 \quad  \text{and} \quad  q_{r+1} =  \frac{2t_r}{r} +  \frac{2s^2_r}{r^2}.
\end{equation}

By inspection, a particular solution of \eqref{eq:LRadding} is $q^*_n = n+1$.  From the theory of difference equations \citep{Elaydi}, a second, linearly independent, solution $q^\circ_n$ is given by 
$$
q^\circ_n = q^*_n \sum_{k=0}^{n-1} \frac{W_k}{(k+1)(k+2)},\quad n>r ,$$
where $W_k$ is the Casoratian associated with the difference equation.  We may set $W_0 =1$ and in this instance $W_n$ is given by the recursion
\begin{eqnarray*}
W_{n+1} &=& \frac{(n+2)^2+1}{(n+1)^2} W_n\\
       &=& \frac12 \left\{(n+2)^2+1\right\} \prod_{k=1}^{n+1} \left(1+ \frac{1}{k^2}\right)\\
       &\sim& n^2 (2 \pi)^{-1} \sinh(\pi),
\end{eqnarray*}
where we have used the product limit of \citet[\S 156]{Euler} and where the notation $f(n) \sim g(n)$ indicates that $\lim_{n\to \infty} f(n)/g(n) =1.$

The general solution of \eqref{eq:LRadding} is given by $q_n = A(n+1) + Bq^{\circ}_n$, where the constants $A$ and $B$ are determined by the initial conditions \eqref{eq:ICadding}.  Hence for $n> r$, 
$$q_n = \frac{(n+1)x_r^2}{r+1} + \frac{(n+1)}{W_r}\left\{2(r+1)\left(\frac{t_r}{r} + \frac{s_r^2}{r^2}\right) -(r+2)x_r^2)\right\}\sum_{k=r}^{n-1}\frac{W_k}{(k+1)(k+2)},$$ and since $W_n \sim n^2 (2\pi)^{-1} \sinh(\pi)$, the limit \eqref{eq:qlimit} follows.   

For the special case considered by \citet{BNK}, the constant $K$ becomes $\sinh(\pi)/( 2 \pi) \doteq 1.83804$ in good agreement with the approximate value of $1.84$ that they obtained by simulation.\end{proof}
Higher moments can be obtained in a similar fashion.  For simplicity we restrict attention to the special case with initial condition $x_1=1$, as in \citet{BNK}. For the third moment $t_n = E(X_n^3)$ we define 
$$
a_n^{[j,k]} = E\left\{X_n^j S^k_{n-1}\right\}  \quad \text{and} \quad  
b_n^{[j,k]} = E\left\{\left(\textstyle{\sum_{\nu=1}^n X^j_\nu}\right) S^k_n\right\}.
$$ 
By the usual conditional expectation arguments we find
\begin{eqnarray*}
a_{n+1}^{[0,3]} &=& a_{n}^{[0,3]} + 3 a_{n}^{[1,2]}  + 3 a_{n}^{[2,1]}  + a_{n}^{[3,0]},\quad 
a_{n+1}^{[1,2]}  =  \frac{2}{n} a_{n+1}^{[0,3]},\\
a_{n+1}^{[2,1]} &=& \frac{2}{n^2} a_{n+1}^{[0,3]} + \frac{2}{n} b_{n}^{[2,1]}, \quad
a_{n+1}^{[3,0]}  =  \frac{2}{n} b_{n}^{[3,0]} + \frac{6}{n^2} b_{n}^{[2,1]},\\
b_{n+1}^{[2,1]} &=& a_{n}^{[0,3]}\left(1+\frac{2}{n}\right) + a_{n+1}^{[2,1]} + a_{n+1}^{[3,0]}, \quad
b_{n+1}^{[3,0]} =  b_{n}^{[3,0]} + a_{n+1}^{[3,0]},
\end{eqnarray*}
with initial conditions $a_1^{[j,k]} = 0$ and $b_1^{[j,k]} = 1$. Reducing this system to a single recurrence for $t_n = a_{n}^{[3,0]}$ yields
\begin{equation}\label{eq:3rdmoment}
\begin{split}
(4 n-3)(n+1)^2(n+2)^2t_{n+3} - 3(4n^3+17n^2+14n-21)(n+1)^2t_{n+2}\\ 
+(12n^5 + 87n^4+234n^3+177n^2-84n-126)t_{n+1}\\ -(n^3+5n^2+11n-5)(4n+1)(n+3)t_{n}=0,
\end{split}
\end{equation}
with initial conditions $t_1=1, t_2 = 8, t_3 = 63/2$.  

Applying the methods of \citet{Adams,Birkhoff}, we substitute trial solutions of the form $n^\sigma \delta^n$ and then $n^\rho$ into \eqref{eq:qpadding} and determine the values of $\sigma$, $\delta$ and $\rho$ for which the leading term is zero.  We find that $\delta= 1$ and then $\rho = 1,2,3$ and therefore conclude that $t_n$ grows asymptotically with the cube of $n$.  Solving the recurrence numerically for the given initial conditions we find that $\lim_{n \to \infty} n^{-3}t_n \doteq 5.7946$. This can be compared with the estimate $5.76$ obtained by simulation in \citet{BNK}.  

For the fourth moment $f_n = E(X^4_n) = a_{n}^{[4,0]}$ we have corresponding equations
\begin{eqnarray*}
a_{n+1}^{[0,4]} &=& a_{n}^{[0,4]} + 4a_{n}^{[1,3} + 6a_{n}^{[2,2]} + 4a_{n}^{[3,1]} + a_{n}^{[4,0]}, \quad   
a_{n+1}^{[4,0]} = \frac{2}{n} b_{n}^{[4,0]} + \frac{8}{n^2} b_{n}^{[3,1]} + \frac{6}{n^2} c_n^{[2]},\\
a_{n+1}^{[1,3]} &=& \frac{2}{n} a_{n+1}^{[0,4]}, \quad
a_{n+1}^{[2,2]} = \frac{2}{n} b_{n}^{[2,2]} + \frac{2}{n^2} a_{n+1}^{[0,4]},\quad
a_{n+1}^{[3,1]} = \frac{2}{n} b_{n}^{[3,1]} + \frac{6}{n^2} b_{n}^{[2,2]},\\
b_{n+1}^{[2,2]} &=& b_{n}^{[2,2]}\left(1 + \frac{4}{n} + \frac{2}{n^2}\right) + \frac{2}{n}c_n^{[2]} + a_{n+1}^{[2,2]} + 2a_{n}^{[3,1]} + a_{n}^{[4,0]}, \quad b_{n+1}^{[4,0]} = b_{n}^{[4,0]} + a_{n+1}^{[4,0]},\\
c^{[2]}_{n+1} &=& c^{[2]}_{n}\left(1 + \frac{4}{n}\right) + \frac{4}{n^2}b_{n}^{[2,2]} +  a_{n+1}^{[0,4]}, 
\quad b_{n+1}^{[3,1]} = b_{n}^{[3,1]}\left(1+ \frac{2}{n}\right) + a_{n+1}^{[3,1]} + a_{n+1}^{[4,0]},
\end{eqnarray*}     
where $c_n^{[2]} = E\left\{\left(\sum_{k=1}^n X^2_k\right)^2\right\}$. Again, reducing the system to a single recurrence for $f_n$ and applying the methods of \citet{Adams,Birkhoff}, we find that $f_n$ grows asymptotically with the fourth power of $n$.  Solving the recurrence numerically we have $\lim_{n \to \infty} n^{-4}f_n \doteq 31.585$.

An understanding of further properties of the process $(X_n; n \geqslant 1)$ is greatly aided by the content of the following:
\begin{lemma} \label{thm:martingale} Let $M_n = S_n/(n(n+1))$ where $S_n = \sum_{k=1}^n X_k$, then $(M_n; n \geqslant r)$ is a martingale with respect to the increasing sequence of $\sigma$-fields $(\mathcal{F}_n; n \geqslant r)$ generated by the sequence $(X_n)$, or equivalently $(S_n)$. Furthermore, there exists a non-degenerate random variable $M$, such that $M_n$ converges to $M$ almost surely and in mean-square as $n \to \infty$, where 
$$E M = \frac{1}{r(r+1)} \sum_{k=1}^r x_k  \quad \text{and} \quad E (M^2) = \frac16 K(x_1,\dots,x_r).$$
\end{lemma}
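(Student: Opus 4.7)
The plan proceeds in four steps: martingale property, a.s.\ convergence, $L^2$ convergence together with identification of the two moments, and non-degeneracy. For the martingale property, a single conditional expectation does the job: since $U(n)$ and $V(n)$ are independent of $\mathcal{F}_n$ and each uniform on $\{1,\ldots,n\}$, we have $E[X_{n+1}\mid\mathcal{F}_n]=(2/n)S_n$, hence $E[S_{n+1}\mid\mathcal{F}_n]=((n+2)/n)S_n$; dividing by $(n+1)(n+2)$ yields $E[M_{n+1}\mid\mathcal{F}_n]=M_n$. Because the $X_k$ are positive, $M_n\geq 0$ is a non-negative martingale, so Doob's martingale convergence theorem produces an a.s.\ limit $M$, and the martingale property gives $EM_n=M_r=s_r/(r(r+1))$ for every $n\geq r$.

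For the $L^2$ limit and the identification $E(M^2)=K/6$, the cleanest route bypasses the second-order recurrence for $p_n=ES_n^2$ and uses the algebraic identity \eqref{eq:qn+1} rearranged as
$$\frac{p_n}{n^4}\;=\;\frac{q_{n+1}}{2n^2}\;-\;\frac{1}{n^3}\sum_{k=1}^n q_k.$$
Theorem~\ref{thm:qlimit} supplies $q_n/n^2\to K$, and the standard Cesaro argument (write $q_k/k^2=K+\varepsilon_k$ with $\varepsilon_k\to 0$) then yields $n^{-3}\sum_{k=1}^n q_k\to K/3$. Therefore $p_n/n^4\to K/2-K/3=K/6$, so $EM_n^2=p_n/[n^2(n+1)^2]\to K/6$, and in particular $\sup_n EM_n^2<\infty$. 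The $L^2$ martingale convergence theorem then upgrades the a.s.\ limit to an $L^2$ limit, and passing to the limit delivers both $EM=s_r/(r(r+1))$ and $E(M^2)=K/6$.

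Non-degeneracy follows because $\mathrm{Var}(M)=K/6-[s_r/(r(r+1))]^2>0$ for any admissible initial data; this can be verified directly from the explicit formula \eqref{eq:K} for $K$, or more transparently from the martingale increment identity $\mathrm{Var}(M_n)=\sum_{k=r}^{n-1}E[(M_{k+1}-M_k)^2]$ together with the calculation $E[(M_{k+1}-M_k)^2\mid\mathcal{F}_k]=2(kT_k-S_k^2)/[k^2(k+1)^2(k+2)^2]$, where $T_k=\sum_{i=1}^k X_i^2$ and the Cauchy--Schwarz bound $kT_k\geq S_k^2$ is strict as soon as the $X_i$ are not all equal. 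The one delicate step in the whole argument is the identification $\lim p_n/n^4=K/6$: a direct coefficient-matching attempt on the first-order recurrence \eqref{eq:an+1} fails because the leading $n^4$ contributions on the two sides cancel automatically, leaving a constraint that would require an $O(n)$-accurate expansion of $q_n$; the rearrangement of \eqref{eq:qn+1} adopted above neatly bypasses that obstacle by expressing $p_n/n^4$ directly as a difference of two Cesaro-type averages, each controlled by Theorem~\ref{thm:qlimit} alone.
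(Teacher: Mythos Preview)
Your argument is correct and follows essentially the same route as the paper's proof: the martingale identity is verified by the same one-line conditional expectation, and the key limit $p_n/n^4\to K/6$ is obtained exactly as in the paper by dividing \eqref{eq:qn+1} through by $n^2$ and invoking Theorem~\ref{thm:qlimit}. You have usefully made explicit two points the paper leaves tacit---the Ces\`aro step needed to conclude $n^{-3}\sum_{k\leq n}q_k\to K/3$, and a direct verification of non-degeneracy via the increment formula $E[(M_{k+1}-M_k)^2\mid\mathcal F_k]=2(kT_k-S_k^2)/[k(k+1)(k+2)]^2$---but the underlying strategy is the same.
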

\begin{proof}
By conditional expectation 
$$E \left(M_{n+1} | \mathcal{F}_n\right) = \frac{E(S_n + X_{n+1}| \mathcal{F}_n) }{(n+1)(n+2)} = \frac{S_n+2S_n/n}{(n+1)(n+2)} = M_n.$$ The mean of $M_n$, and hence of $M$, follows from \eqref{eq:martmean}. Dividing \eqref{eq:qn+1} by $n^2$, allowing $n \to \infty$, and noting \eqref{eq:qlimit}, yields 
\begin{equation} \label{eq:alimit}
\lim_{n\to \infty}n^{-4} E S^2_n = \frac16 K(x_1,\dots,x_r).
\end{equation}
The existence of this limit ensures that $E M_n^2$ is uniformly bounded for all $n$. The probabilistic limit results then follow from the martingale convergence theorem \citep{Doob}.  
\end{proof}
As an immediate corollary, we remark that 
\begin{equation} \label{eq:Xmart}
E (X_m M_n | \mathcal{F}_m) = \frac{X_m S_m}{m(m+1)}, \quad \text{for $n \geqslant m$}.
\end{equation}
We now turn to consider the auto-covariance properties of the process $(X_n)$, where we have the following result.
\begin{theorem} \label{thm:2}
Let $m,n \to \infty$, with $m \leqslant n$ then, 
\begin{equation}
n^{-2} E(X_m X_n) \to \begin{cases}\frac{2}{3} \theta K & \text{if $m/n \to \theta \in (0,1)$},\\
                                     K& \text{if $m=n$},
                       \end{cases}
\end{equation}
where $K=K(x_1,\dots,x_r)$ is defined in \eqref{eq:K} above.                        
\end{theorem}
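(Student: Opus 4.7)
The plan is to handle the trivial diagonal case first, then use the martingale corollary to reduce everything to known asymptotics from Theorem~\ref{thm:qlimit} and its proof.

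\textbf{Diagonal case.} When $m=n$, the statement $n^{-2} E X_n^2 \to K$ is precisely Theorem~\ref{thm:qlimit}, so nothing new is needed.

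\textbf{Off-diagonal case.} For $n > m$, I would first derive a closed form for $E(X_m X_n)$ in terms of quantities at level $m$ only. Conditioning on $\mathcal{F}_{n-1}$ and using $E(X_n \mid \mathcal{F}_{n-1}) = 2 S_{n-1}/(n-1)$, we get
\[
E(X_m X_n) \;=\; \frac{2}{n-1}\, E(X_m S_{n-1}).
\]
To eliminate the dependence on $n$ inside the expectation, I would invoke the corollary \eqref{eq:Xmart}: since $(M_k)_{k \ge r}$ is a martingale and $X_m$ is $\mathcal{F}_m$-measurable, for $k \ge m$ we have $E(X_m S_k) = k(k+1)\, E(X_m S_m)/(m(m+1))$. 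Substituting with $k = n-1$ collapses the formula to
\[
E(X_m X_n) \;=\; \frac{2n}{m(m+1)}\, E(X_m S_m), \qquad n > m.
\]

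\textbf{Asymptotics of $E(X_m S_m)$.} Writing $E(X_m S_m) = q_m + E(X_m S_{m-1})$ and using $E(X_m \mid \mathcal{F}_{m-1}) = 2 S_{m-1}/(m-1)$ gives $E(X_m S_{m-1}) = 2 p_{m-1}/(m-1)$. From Theorem~\ref{thm:qlimit}, $q_m \sim K m^2$, and from \eqref{eq:alimit} in the proof of Lemma~\ref{thm:martingale}, $p_m \sim \tfrac{1}{6} K m^4$. Hence
\[
E(X_m S_m) \;\sim\; K m^2 + \frac{2}{m-1} \cdot \frac{K (m-1)^4}{6} \;\sim\; \frac{K m^3}{3}.
\]

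\textbf{Conclusion.} Combining the last two displays,
\[
\frac{E(X_m X_n)}{n^2} \;=\; \frac{2}{n\, m(m+1)}\, E(X_m S_m) \;\sim\; \frac{2 K m}{3 n} \;\to\; \frac{2}{3}\,\theta K
\]
whenever $m/n \to \theta \in (0,1)$. I do not anticipate a serious obstacle: the one point that needs a little care is verifying that the $\sim$-relations can be inserted into the exact identity (they can, since the denominators $m(m+1)$ and $n$ are deterministic and the convergence $q_m/m^2 \to K$, $p_m/m^4 \to K/6$ is genuine, not just subsequential). The whole argument is essentially two conditional-expectation identities plus the two limits already established.
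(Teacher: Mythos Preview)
Your argument is correct and follows essentially the same route as the paper: reduce $E(X_mX_n)$ to $\tfrac{2}{n-1}E(X_mS_{n-1})$, use the martingale identity \eqref{eq:Xmart} to pull this back to $E(X_mS_m)/(m(m+1))$, and then evaluate $E(X_mS_m)\sim \tfrac13 K m^3$ from the known asymptotics of $q_m$ and $p_m$. The only cosmetic difference is that the paper packages the last step via $E(X_{n+1}S_{n+1})=\tfrac{2}{n}p_n+q_{n+1}$ (your identity shifted by one index) and pauses to note the boundary case $E(X_nX_{n+1})/n^2\to\tfrac23K$ explicitly.
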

\begin{proof}  
By conditional expectation, we have
$$
E (X_{n+1}S_{n+1}) = \frac{2}{n}p_n + q_{n+1}, 
$$ 
so that \eqref{eq:alimit} and theorem \ref{thm:qlimit} give
\begin{equation} \label{eq;xslimit}
\lim_{n\to \infty}n^{-3} E(X_{n+1}S_{n+1}) = \textstyle{\frac13} K.
\end{equation}
Again by conditional expectation, we have
\begin{equation} \label{eq:autocov}
E(X_m X_{n+1}) = \frac{2}{n} E (X_m S_n), \quad n \geqslant m, 
\end{equation}
so that with $m=n$
\begin{equation}
\lim_{n\to \infty}n^{-2} E(X_{n}X_{n+1}) = \lim_{n\to \infty}2 n^{-3} E(X_{n}S_{n}) = \textstyle{\frac23} K.
\end{equation}
Finally, considering $m,n \to \infty$ with $m/n \to \theta$, for some fixed $\theta \in (0,1)$, by \eqref{eq:autocov} and \eqref{eq:Xmart} we have
$$\frac{E (X_m X_n)}{n(n+1)} = \frac{2 E(X_mS_{n-1})}{(n-1)n(n+1)} = \frac{2 E(X_m S_m)}{(n-1)m(m+1)} \to \textstyle{\frac23} \theta K.$$
\end{proof}
\subsection{Sample paths}\label{sec:samplepaths}
We can now give an informal but quite precise description of a typical trajectory of the process $(X_n, n \geqslant 1)$ for large $n$. From lemma \ref{thm:martingale} we see that each trajectory has its own limiting value of $M_n =S_n/(n(n+1))$. These values vary from trajectory to trajectory with $E M^2_n \to  K/6$ as $n \to \infty$. Computer simulations show that, when scaled by $S_n/(n+1)$, the variables $X_{U(n)} + X_{V(n)}$ have approximate probability density $f_W(w) = w e^{-w}, w >0$,  as illustrated in figure \ref{fig:1}. This is not unexpected since a related energy splitting model (also due to Ulam, see \citet{BM}) has $f_W(w)$ as its fixed point density.  Specifically, the process $(X_n, n \geqslant 1)$ can be reformulated as follows. At stage $n+1$, sample from the collection $ \{Y_k, k=1,\dots,n\}$ where $Y_k=X_k/k$, by selecting an index $k$ uniformly from $\{1,\dots,n\}$. Then multiply $Y_k$ by $k/(n+1)$, repeat independently and add the results to obtain $Y_{n+1}$. The analogous splitting model is defined by the distributional equality $W \overset{d}{=} U W_1  + V W_2 $ where $U$ and $V$ are independent variables uniform on $(0,1)$ and $W_1$ and $W_2$ are independent copies of $W$. It is straightforward to show that $f_W(w)$ is the fixed point density and it also follows that $UW_1$ and $VW_2$ are independently exponentially distributed. 
More formally we have the following 
\begin{theorem}\label{thm:weak}
\begin{equation}\label{eq:weak1}\lim_{n \to \infty} P(X_n/(M n)\leqslant x) = \int_0^x w e^{-w} dw, \quad x \geqslant 0.
\end{equation}
Furthermore 
\begin{equation}\label{eq:weak2}\lim_{n \to \infty} P(X_{U(n)}/(M n) \leqslant x) = \int_0^x e^{-w} dw, \quad x \geqslant 0.
\end{equation}
\end{theorem}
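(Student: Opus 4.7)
My plan is to recast the recursion in scaled form and apply a contraction-method argument in the style of the standard fixed-point analyses for random recursive equations. First I set $Y_n:=X_n/n$, for which the defining recursion becomes
\[
Y_{n+1}=\xi_n Y_{U(n)}+\eta_n Y_{V(n)}, \quad \xi_n:=\tfrac{U(n)}{n+1},\ \eta_n:=\tfrac{V(n)}{n+1},
\]
with $(\xi_n,\eta_n)\Rightarrow(U,V)$, a pair of independent Uniform$(0,1)$ variables. For nonnegative initial data with $s_r>0$ we have $M>0$ almost surely, so $Z_n:=Y_n/M$ is a.s.\ well defined and satisfies the same recursion $Z_{n+1}=\xi_n Z_{U(n)}+\eta_n Z_{V(n)}$. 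Lemma~\ref{thm:martingale} together with Theorem~\ref{thm:qlimit} yield $E(Z_n)\to 2$ and $\sup_n E(Z_n^2)<\infty$, so $\{\mathcal L(Z_n)\}$ is tight.

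Next I identify the candidate limit through the distributional equation $W\stackrel{d}{=}UW_1+VW_2$ with $U,V,W_1,W_2$ mutually independent, $U,V\sim\mathrm{Unif}(0,1)$, $W_i\stackrel{d}{=}W$. A direct Laplace calculation, $E e^{-tUW_1}=\int_0^1(1+tu)^{-2}du=(1+t)^{-1}$, confirms $UW_1\sim\mathrm{Exp}(1)$, whence $W$ has Laplace transform $(1+t)^{-2}$, i.e.\ density $we^{-w}$. The operator $T\mu:=\mathcal L(UW_1+VW_2)$ with $W_1,W_2\sim\mu$ i.i.d.\ is a strict contraction with factor $\sqrt{E U^2+E V^2}=\sqrt{2/3}$ on the $L^2$-Wasserstein space of measures on $\mathbb R_+$ with mean $2$ and finite variance -- an optimal coupling combined with the equal-mean property kills the cross term -- so Gamma$(2,1)$ is the \emph{unique} such fixed point. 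Any subsequential weak limit $\mu_\infty$ of $\mathcal L(Z_n)$ therefore must coincide with $\mathcal L(W)$, and tightness upgrades this to $Z_n\Rightarrow W$, which is \eqref{eq:weak1}.

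The hard part will be justifying the limiting identity $\mu_\infty=T\mu_\infty$, because $Z_{U(n)}$ and $Z_{V(n)}$ share history through the common limit $M$ and so are not independent. I would handle this along the lines of Rösler--Neininger's contraction method for randomly-weighted recursions: couple $(Z_{U(n)},Z_{V(n)})$ to independent draws from the averaged marginal $n^{-1}\sum_{k=1}^n\mathcal L(Z_k)$, with the coupling error controlled by the conditional decorrelation $\mathrm{Cov}(Z_i,Z_j\mid M)\to 0$ for $i\ne j$ with $i\wedge j\to\infty$ -- a cross-covariance bound parallel to Theorem~\ref{thm:2} but performed after conditioning on the martingale limit $M$, so that the shared $M^2$-dependence in $E(X_iX_j)$ is absorbed into the normalisation by $M$ and the residual covariance vanishes. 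Once \eqref{eq:weak1} is in hand, \eqref{eq:weak2} follows rapidly: write
\[
\frac{X_{U(n)}}{M n}=\frac{U(n)}{n}\,Z_{U(n)},
\]
and note the joint weak convergence $(U(n)/n,\,Z_{U(n)})\Rightarrow(U,W)$ with $U\perp W$ (by uniform selection together with \eqref{eq:weak1}); the splitting identity $UW\sim\mathrm{Exp}(1)$ derived above then delivers the exponential limit $\int_0^x e^{-w}\,dw$.
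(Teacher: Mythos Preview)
Your overall strategy---contraction in $L^2$-Wasserstein toward the unique fixed point of $W\stackrel{d}{=}UW_1+VW_2$---matches the paper's, and your identification of the Gamma$(2,1)$ law and the contraction factor $\sqrt{2/3}$ are correct. The gap is exactly where you flag it: you normalise by the \emph{limit} $M$, so that $Z_{U(n)}$ and $Z_{V(n)}$ share the random denominator $M$ and are not conditionally independent, and then the identity $\mu_\infty=T\mu_\infty$ for subsequential limits is not established. Your proposed fix (control the coupling error via $\mathrm{Cov}(Z_i,Z_j\mid M)\to 0$) amounts to proving a conditional-on-$M$ analogue of Theorem~\ref{thm:2}, which is a substantial piece of work you have not carried out. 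A related unjustified step is the claim that Lemma~\ref{thm:martingale} and Theorem~\ref{thm:qlimit} give $E Z_n\to 2$ and $\sup_n E Z_n^2<\infty$: those results bound $EX_n$, $EX_n^2$ and $EM^2$, not the ratio moments $E(X_n/M)$ and $E(X_n^2/M^2)$; you also assert but do not prove $M>0$ a.s.

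The paper avoids all of this by a small but decisive change of normalisation: it sets $\tilde Y(k)=X_k/(kM_{k-1})$, dividing by the \emph{running} martingale rather than its limit. Because $M_n$ is $\mathcal F_n$-measurable, the two summands $X_{\lceil U'n\rceil}/((n+1)M_n)$ and $X_{\lceil V'n\rceil}/((n+1)M_n)$ are genuinely i.i.d.\ conditional on $\mathcal F_n$, each with conditional mean $1$, so the Bickel--Freedman inequality applies directly. This yields a clean recursive bound on $\beta_n=d_2^2(\tilde Y(n),W)$ of the form $\beta_{n+1}\leqslant 2\sum_{k\leqslant n}\beta_k\,k^2M_{k-1}^2/\bigl(n(n+1)^2M_n^2\bigr)+o(1)$, and taking $\limsup$ gives $\limsup\beta_n\leqslant\tfrac23\limsup\beta_n$, hence $\beta_n\to0$; no tightness or subsequence argument is needed, and the passage from $M_n$ to $M$ happens only at the very end via $M_n\to M$ a.s. The same intermediate bound also delivers \eqref{eq:weak2} directly, since one of its terms is precisely $d_2^2\bigl(X_{\lceil Un\rceil}/((n+1)M_n),\,UW_1\bigr)$ and $UW_1$ is exponential.
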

\begin{proof} 
Our approach follows that of \citet{rosler1991limit} and \citet{geiger2000new} in their analysis of the Quicksort algorithm and Yaglom's exponential limit law. We make use of the Mallows distance (also known as Vassershtein distance) between the distribution of random variables $X$ and $Y$, namely
$$ d_2(X,Y) = \inf_{\mathscr{C}} \sqrt{E(X-Y)^2},$$
where the infimum is over all couplings of $X$ and $Y$, i.e.\ over all joint distributions with the specified margins. For background see \citet[\S 8]{bickel1981some}. In particular, note that the infimum is always attained. 

Let  $\tilde{Y}(k) = X_k/(k M_{k- 1}), k = 1,2,\dots$ and let $U',V'$ be independently uniform on $(0,1)$ then \eqref{eq:BSU} can be recast as
\begin{equation}\label{eq:Y} \tilde{Y}(n+1) = \frac{X_{\lceil  U'n \rceil}}{(n+1) M_n} + \frac{X_{\lceil V' n \rceil}}{(n+1) M_n}.
\end{equation}

Now let $\beta_k$ be the squared Mallows distance between the distribution of $\tilde{Y}(k)$ and the distribution of $W$, $k \geqslant 1$.  We will always choose a representation  $\tilde{Y}(k)$ that attains the infimum of $E(\tilde{Y}(k)-W)^2$ over all couplings $\mathscr{C}$, so that 
$$\beta_k = d^{\,2}_2(\tilde{Y}(k),W) = E(\tilde{Y}(k)-W)^2, \quad k = 1, 2, \dots.$$
From lemma 8.3 in \citet{bickel1981some} for \eqref{eq:weak1}, it is sufficient to show $\beta_{n} \to 0$ as $n \to 0$.

From \eqref{eq:Y} and using $W \overset{d}{=} U W_1  + V W_2 $ we have
\begin{align}\label{eq:beta1}\beta_{n+1} &= d^{\,2}_2(\tilde{Y}(n+1),W) = d^{\,2}_2(\tilde{Y}(n+1),W_1 U + W_2 V)\nonumber\\
&= d^{\,2}_2\left(\frac{X_{\lceil U' n \rceil}}{(n+1) M_n} + \frac{X_{\lceil V' n \rceil}}{(n+1) M_n},W_1 U + W_2 V\right)\nonumber\\
&\leqslant d^{\,2}_2\left(\frac{X_{\lceil U' n \rceil}}{(n+1) M_n},W_1 U\right) + d^{\,2}_2\left(\frac{X_{\lceil V' n \rceil}}{(n+1) M_n},W_2 V\right)\nonumber\\
&= 2d^{\,2}_2\left(\frac{X_{\lceil U' n \rceil}}{(n+1) M_n},W_1 U\right),
\end{align}
using lemma 8.7 of \citet{bickel1981some} for the inequality, since $X_{\lceil U' n \rceil}/((n+1)M_n)$ and $X_{\lceil V' n \rceil}/((n+1)M_n)$ are iid conditional on $\mathcal{F}_n$, each with the same mean as  $W_1 U$ and $W_2V$.

With the coupling $U' = U$ we then have
\begin{align}\label{eq:beta2}\beta_{n+1} &\leqslant 2 E\Big(\frac{X_{\lceil U n \rceil}}{(n+1) M_n}-W_1 U\Big)^2\nonumber\\ 
&= 2E\big((Y(\lceil U n \rceil)-W_1)\phi_n(U) + W_1 (\phi_n(U) - U)\big)^2\nonumber\\
&\leqslant 2E\big((Y(\lceil U n \rceil)-W_1)\phi_n(U)\big)^2 + 2E\big(W_1^2(\phi_n(U) - U)^2\big) \nonumber\\
&=2 \sum_{k=1}^n E(Y(k)-W)^2 \int_{(k-1)/n}^{k/n}\phi_n^2(u)du + 2E(W^2) E(\phi_n(U)-U)^2\nonumber\\
&=2 \sum_{k=1}^n \beta_k \frac{k^2 M^2_{k-1}}{n(n+1)^2M^2_n}+ 12E\big(\phi_n(U)-U)^2,
\end{align}
where $\phi_n(u) =  \lceil u n \rceil M_{\lceil u n \rceil - 1}/((n+1) M_n), 0\leqslant u \leqslant 1.$

We can write the first term in \eqref{eq:beta2} as the weighted average of $(\beta_k M^2_{k-1})$ multiplied by a term converging to $2/(3M^2)$ as $n \to \infty$, so that for the $\limsup$ we have
$$ \limsup \sum_{k=1}^n  \frac{6 k^2 \beta_k M^2_{k-1}}{n(n+1)(2 n+1)} \frac{n(n+1)(2 n+1)}{3n(n+1)^2M_n^2} \leqslant  \limsup \{\beta_n M^2_{n-1}\} \frac{2}{3M^2} = \frac23 \beta,$$
where $\beta = \limsup \beta_n$. Since the second term in \eqref{eq:beta2} converges to $0$ as $n \to \infty$, by taking the $\limsup$ of both sides of  \eqref{eq:beta2} we have $\beta \leqslant \frac23 \beta$ and hence $\beta_n \to 0$. The assertion \eqref{eq:weak2} follows in the same way via \eqref{eq:beta2} since $W_1 U$ has the standard exponential distribution.
\end{proof} 
Informally it can then be argued that $Y$, the limiting value of $Y_n$, is of the form $WM$ where $M$ is the limiting distribution of $M_n$. As a consequence, the moments of $Y$ should have a simple relation to those of $M$, for example $E(Y^2) = 6 E(M^2)$. 

\begin{figure}[h]
\centering
\centering
\includegraphics[width=0.45\linewidth,angle=90,trim=5cm 1cm 7cm 1cm]{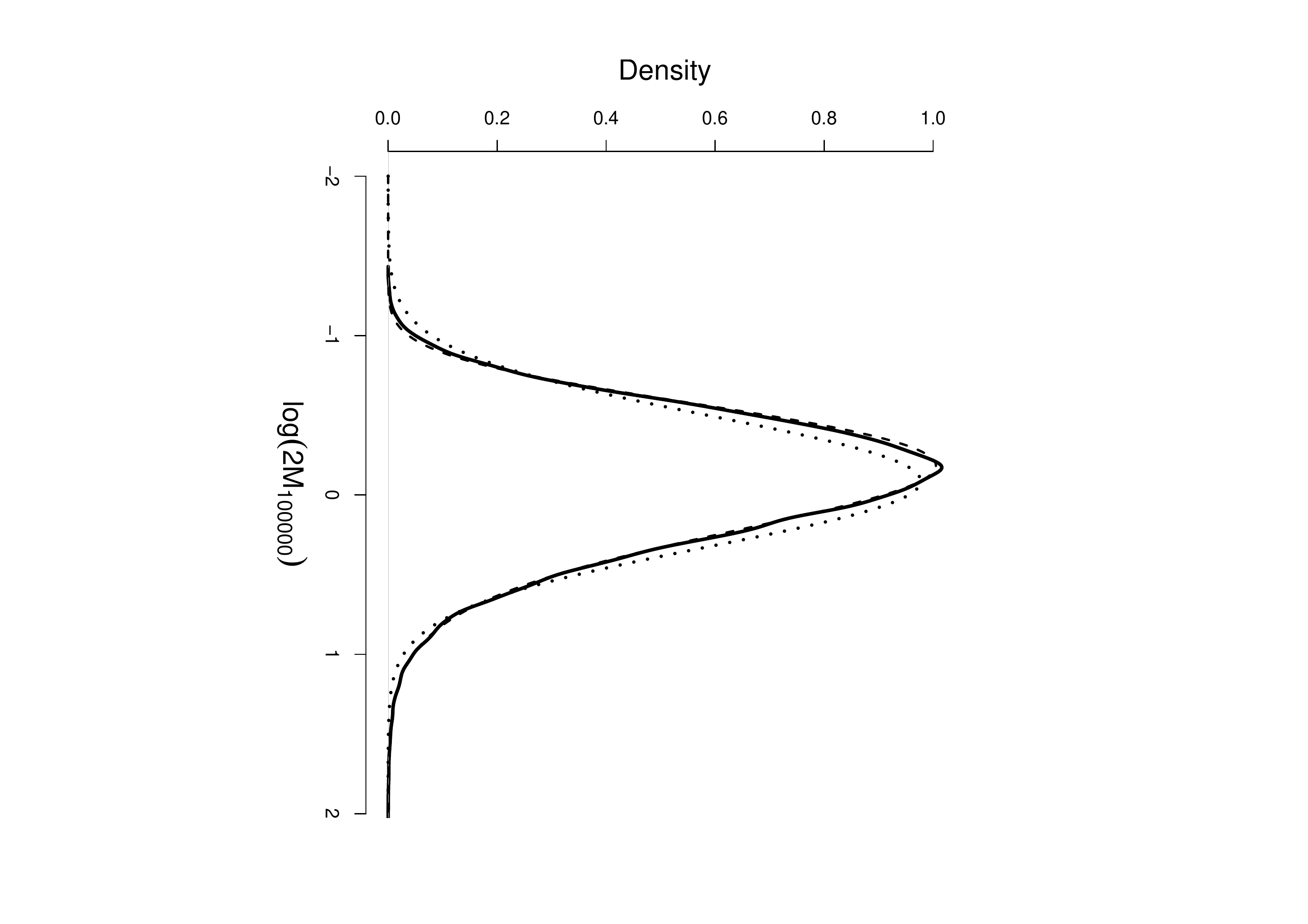}
\includegraphics[width=0.45\linewidth,trim=1cm 4.7cm 1cm 7cm]{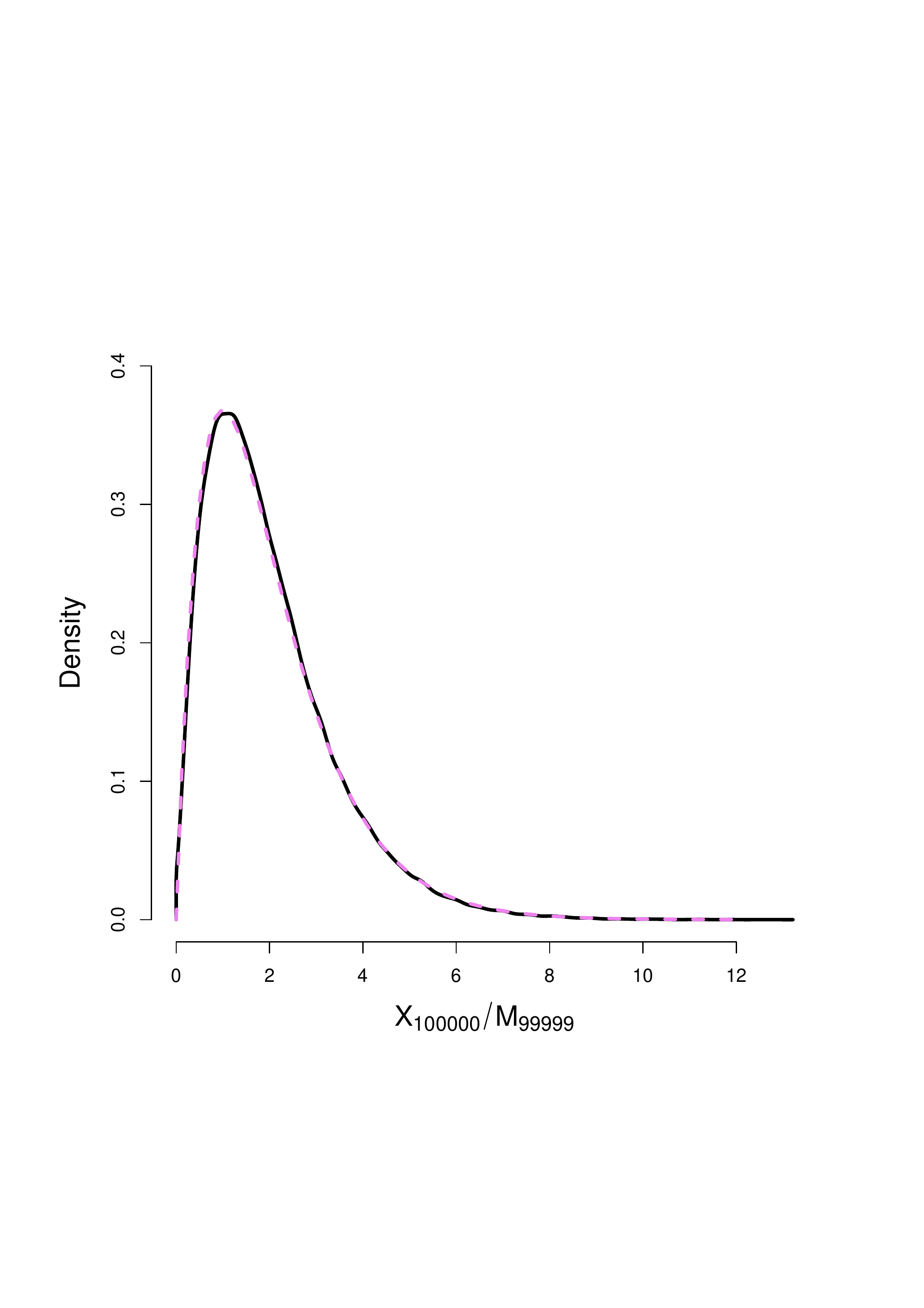}
\caption{Left: simulated density of $\log(2 M_{100000})$ (---) with fitted normal density (...) and gamma density (- - -). Right: simulated density of $X_{100000}/M_{99999}$ (---) with fitted density $f_W(w)$ (- - -); $10^4$ independent realisations.} \label{fig:1}
\end{figure}

Furthermore, we can write 
$$M_{n+1}=\frac{S_{n+1}}{(n+1)(n+2)} = \frac{S_n + X_{U(n)} +X_{V(n)}}{(n+1)(n+2)} 
= M_n\left\{1+ \frac{W_n-2}{n+2}\right\},$$   
where $W_n = \left(X_{U(n)}+X_{V(n)}\right)/M_n,$ and since we have empirical evidence that $(W_n)$ are independently distributed from the same distribution, we can anticipate that the limiting distribution of $M_n$ will be approximately log-normal or, more generally, in the log-gamma family. Figure \ref{fig:1} shows the estimated density of $\log (2 M_n)$, simulated from the initial condition $x_1=1$, compared with fitted gamma and normal densities. (The factor of $2$ is introduced for convenience, so that the mean of $2M_n$ is 1 with this initial condition.) The log-gamma density is seen to provide an excellent fit. 
As a more rigorous test, we can compare the numerically determined moments of $2M$ with those of the candidate distributions.  Using $\mu_k = E\{(2M)^k\} = E\{(2Y)^k\}/ E(W^k)$ for $k=1,2,3,4$ and the moments of $Y$ obtained in the previous section we find $\mu_1=1, \mu_2 \doteq 1.225,\mu_3 \doteq 1.932,\mu_4 \doteq 4.211$. The fourth moment of a log-gamma distribution fitted by the first three of these moments is $4.194$ which is within half a percent of the value $\mu_4$.

\section{The \texorpdfstring{$p$}{p}-adding process in discrete time}\label{sec:3}
We now consider a simple modification of the process defined in \eqref{eq:BSU}, where history-dependent updates occur randomly and independently with probability $p$, where $p<1$.  The new process is as follows.

\medskip
{\sc Definition 3.1}. Let $(J_n,n\geqslant 1)$ be a sequence of independent Bernoulli variables each with success probability $p$ and let $(U(n))$ and $(V(n))$ be sequences of independent variables (also independent of $(J_n)$) such that for any given $n$, $U(n)$ and $V(n)$ are each uniformly distributed on $\{1,\dots,n\}$. The $p$-adding process with fixed initial condition $(X_1=x_1,\dots,X_r=x_r)$ is defined by
\begin{equation}\label{eq:p_add}  
X_{n+1} = J_n[X_{U(n)} +X_{V(n)}] +(1-J_n)X_n, \quad n\geqslant r.
\end{equation}
\begin{theorem}\label{thm:paddingmean}
The $p$-adding process has mean 
\begin{equation} \label{eq:paddingmean}
E X_n = (\nu + p n)\left\{\frac{x_r}{\nu +p r} + \frac{C p r}{\nu^{r-1}}\sum_{k=r}^{n-1} \frac{\nu^{k-1}}{k(\nu+p k)(1 + pk)}\right\},\quad n \geqslant r,
\end{equation} 
where $\nu=1-p$, $C= 2 s_r (\nu+pr)/r - 
(1+\nu+pr)x_r$ and $s_r= \sum_{k=1}^r x_k$, with the convention that the summation in \eqref{eq:paddingmean} is zero when the upper limit is less than the lower.
\end{theorem}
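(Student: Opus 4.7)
The plan is to derive a closed-form for $m_n = EX_n$ by first conditioning, then reducing to a second-order linear recursion, and finally solving it via reduction of order so that the second linearly-independent solution telescopes cleanly.

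First I would condition on $\mathcal{F}_n$ in \eqref{eq:p_add}. Since $E(X_{U(n)} + X_{V(n)} \mid \mathcal{F}_n) = (2/n)S_n$ and $J_n$ is independent, this yields
\begin{equation*}
m_{n+1} = \nu m_n + \frac{2p}{n}M_n, \qquad n \geqslant r,
\end{equation*}
where $M_n = E S_n$ and $\nu = 1-p$. Writing the analogue at $n+1$, using $M_{n+1}=M_n+m_{n+1}$, and eliminating $M_n$ and $M_{n+1}$ produces the three-term linear recursion
\begin{equation*}
(n+1)m_{n+2} = \bigl((2-p)n + 1 + p\bigr)\,m_{n+1} - \nu n\,m_n, \qquad n \geqslant r,
\end{equation*}
with initial data $m_r = x_r$ and $m_{r+1} = 2ps_r/r + \nu x_r$ read off directly from \eqref{eq:p_add}.

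Next I would look for a particular solution of the form $m_n^\ast = \nu + pn$; a direct substitution (collecting coefficients of $n^2$, $n$ and the constant) verifies that this is indeed a solution. Setting $m_n = (\nu+pn)a_n$ and letting $d_n = a_{n+1} - a_n$ then causes the $a_n$-terms to drop out and reduces the problem to the first-order recursion
\begin{equation*}
(n+1)\bigl(\nu+p(n+2)\bigr)\,d_{n+1} = \nu n(\nu+pn)\,d_n,
\end{equation*}
after the key algebraic identity $((2-p)n+1+p)(\nu+p(n+1)) - (n+1)(\nu+p(n+2)) = \nu n(\nu+pn)$. I expect this identity, together with checking that $\nu+pn$ is a homogeneous solution, to be the main computational obstacle; both are routine but must be done carefully.

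Finally, I would telescope: the ratios $k/(k+1)$ collapse to $r/n$, the factors $\nu$ to $\nu^{n-r}$, and the crucial telescoping of $(\nu+pk)/(\nu+p(k+2))$ yields
\begin{equation*}
d_n = d_r \cdot \frac{r\,\nu^{n-r}(\nu+pr)(1+pr)}{n(\nu+pn)(1+pn)},
\end{equation*}
using $\nu + p(k+1) = 1+pk$. The initial slope $d_r = (m_{r+1}/(1+pr)) - (x_r/(\nu+pr))$ simplifies, upon collecting common factors and using $\nu^2 - 1 = -p(1+\nu)$, to $d_r = pC/((\nu+pr)(1+pr))$ with $C = 2s_r(\nu+pr)/r - (1+\nu+pr)x_r$. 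Summing $a_n = a_r + \sum_{k=r}^{n-1} d_k$ with $a_r = x_r/(\nu+pr)$ and multiplying through by $\nu + pn$ then delivers \eqref{eq:paddingmean}, the empty-sum convention handling the case $n=r$ automatically.
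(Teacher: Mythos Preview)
Your proposal is correct and follows essentially the same route as the paper: conditioning to get the first-order relation, eliminating the partial sum to obtain the second-order recursion $(n+1)m_{n+2}-[(1+\nu)n+p+1]m_{n+1}+\nu n m_n=0$, spotting the particular solution $\nu+pn$, and then generating the second solution from it before matching the initial data $m_r=x_r$, $m_{r+1}=\nu x_r+2ps_r/r$. The only cosmetic difference is that the paper packages the second step as ``the Casoratian can be shown to be $\nu^{k-1}/k$'' whereas you carry out the reduction of order and telescoping explicitly; these are the same computation, and your ``key algebraic identity'' is in fact just a rearrangement of the statement that $\nu+pn$ solves the homogeneous recursion, so no separate verification is needed.
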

\begin{proof}
Let $m_n= E X_n$, then by the usual conditioning arguments
\begin{equation} \label{eq:paddingint}
m_{n+1} = \nu m_n + \frac{2 p}{n} \sum^n_{k=1} m_k,\quad n\geqslant r,
\end{equation}
which can be recast as the difference equation
\begin{equation} \label{eq:paddingDE}
(n+1)m_{n+2} -[n(1+\nu)+ p+1]m_{n+1} + n\nu m_n = 0, \quad n\geqslant r.
\end{equation}
By inspection, $\nu+pn$ is seen to be a solution and the Casoratian can be shown to be $\nu^{k-1}/k$.  The general solution is then 
$$
(\nu+pn)\left\{A + B \sum_{k=1}^{n-1} \frac{\nu^{k-1}}{k(\nu+p k)(1 + pk)}\right\},n\geqslant r,
$$
where $A$ and $B$ are arbitrary constants.  The solution \eqref{eq:paddingmean} then follows from the initial conditions $m_r = x_r$ and $m_{r+1} = \nu x_r + 2p s_r/r$. \end{proof}
From \eqref{eq:paddingmean}, since the partial sum has a finite limit, we see that $m_n$ grows linearly with $n$ as $n \to \infty$.  For the second moment we have
\begin{theorem} \label{thm:qpaddinglimit}
\begin{equation}\label{eq:qpaddinglimit}
\frac{E X_n^2}{n^2} \to K(p,x_1,\dots,x_r), \quad  \text{as $n\to \infty,$}
\end{equation}
where $K$ is a function of $p$ and $x_1,\dots,x_r$;  equal to $K$ in \eqref{eq:K} when $p=1$.
\end{theorem}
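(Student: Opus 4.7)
The plan parallels the proof of Theorem \ref{thm:qlimit}, extended to accommodate the Bernoulli switching. Set $q_n = EX_n^2$, $p_n = ES_n^2$, $T_n = \sum_{k=1}^n q_k$, and introduce the auxiliary $u_n = E(S_n X_n)$. Using $J_n^2 = J_n$, $J_n(1-J_n)=0$ and the independence of $J_n$ from $\mathcal{F}_n$, $U(n)$, $V(n)$, conditional expectation applied to \eqref{eq:p_add} yields
\begin{align*}
q_{n+1} &= \nu q_n + \frac{2p}{n}T_n + \frac{2p}{n^2}p_n,\\
u_{n+1} &= \nu u_n + \frac{2p}{n}p_n + q_{n+1},\\
p_{n+1} &= \Bigl(1+\tfrac{4p}{n}\Bigr)p_n + 2\nu u_n + q_{n+1},\\
T_{n+1} &= T_n + q_{n+1},
\end{align*}
with initial values determined by $x_1,\dots,x_r$. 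Note that when $p=1$ (so $\nu = 0$) the $u$-recursion decouples and the system reduces exactly to the pair \eqref{eq:qn+1}, \eqref{eq:an+1} used in the proof of Theorem \ref{thm:qlimit}.

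Next I would eliminate $u_n$, $p_n$, $T_n$ to obtain a single linear difference equation for $q_n$, now of fourth order with polynomial-in-$n$ coefficients, and analyse its large-$n$ behaviour by the Birkhoff--Adams method as used later in the paper for the third and fourth moments \citep{Adams,Birkhoff}. Substituting Ans\"atze $q_n\sim Kn^2$, $u_n\sim Bn^3$, $p_n\sim An^4$, $T_n\sim Cn^3$ into the four recursions and balancing leading orders forces the uniquely consistent relations $A=K/6$, $B=C=K/3$; this identifies $\delta=1$ with $\rho=2$ as the dominant mode. Birkhoff's theorem then provides a basis of asymptotic solutions of the reduced recurrence with the predicted growth rates, and the initial conditions pick out the unique linear combination; its $n^2$-coefficient is the limit $K(p,x_1,\dots,x_r)$.

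For the specialisation $p=1$: the decoupling of $u_n$ reduces the fourth-order recurrence to the second-order \eqref{eq:LRadding}, so the explicit Casoratian and particular solution obtained in Theorem \ref{thm:qlimit} recover the formula \eqref{eq:K}.

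The main obstacle is the algebraic bookkeeping required to eliminate three auxiliary variables from the fourth-order system, together with a rigorous verification of the Birkhoff--Adams hypotheses (existence of a basis of asymptotic solutions, of which the dominant grows as $n^2$, and no spurious faster-growing mode is excited by the initial data). Unlike in Theorem \ref{thm:qlimit}, the Casoratian of the reduced recurrence does not appear to factor through Euler's product \citep[\S 156]{Euler}; so I expect $K(p,x_1,\dots,x_r)$ only to be delivered implicitly, through the four matching relations above or through a more intricate infinite product, rather than in a compact closed form.
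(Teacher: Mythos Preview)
Your approach is essentially identical to the paper's: the same four auxiliary quantities (your $u_n = E(S_n X_n)$ differs from the paper's $w_n = E(X_n S_{n-1})$ only by $q_n$), elimination to a single fourth-order linear recurrence in $q_n$, and Birkhoff--Adams analysis of that recurrence, yielding $\delta\in\{1,1-p\}$ and $\rho\in\{1,2\}$ so that the dominant mode is $n^2$. The paper likewise omits the algebraic elimination ``for brevity'', does not produce a closed form for $K(p,x_1,\dots,x_r)$, and handles the $p=1$ reduction exactly as you describe.
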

\begin{proof}
Define $S_n = \sum_{k=1}^n X_k$ and let $p_n = E S_n^2$, $q_n= E X^2_n$, $w_n = E X_n S_{n-1}$ and $t_n=\sum_{k=1}^n q_k$. Using the usual conditional expectation arguments, we have
\begin{equation} \label{eq:qw}
 q_{n+1} = (1-p) q_n +p\left[\frac2n t_n +\frac2{n^2}p_n\right], \quad w_{n+1} = (1-p)(w_n +q_n) +\frac{2p}n p_n,\quad n \geqslant r,
\end{equation}
with the additional identities $t_{n+1} = t_{n} + q_n$ and $p_n = p_{n-1} + 2 w_n + q_n$.  Omitting the details for the sake of brevity, this system of recurrences can be reduced to the single fourth order linear difference equation for $q_n$. 
\begin{equation}\label{eq:qpadding}
\begin{split}
(n+3)^2 q_{n+4} + [(2p-4)n^2+(4p-18)n-3p-21]q_{n+3} \\
+[(6-6p+p^2)n^2+(18-8p-2p^2)n+15+2p^2]q_{n+2}\\
- (1-p)[(4-2p)n^2+(2p+6)n+3]q_{n+1} +(1-p)^2 n^2q_n =0. 
\end{split}
\end{equation}
As before, we refer to \citet{Adams,Birkhoff} and substitute trial solutions of the form $n^\sigma \delta^n$ and then $n^\rho$ into \eqref{eq:qpadding}.  By considering the leading terms in the resulting expressions, we find that $\delta= 1, 1-p$ and then $\rho = 1,2$ and therefore conclude that $q_n$ grows quadratically as $n \to \infty$.
\end{proof}
\subsection{Numerical results}
We have investigated the behaviour of $K(p,x_1,\dots,x_r)$ numerically for various values of $p$ in the case $x_r=r=1$.  For comparison purposes, we rescale time so that for each $p$ jumps occur at mean rate 1. On this time scale the limiting constant is  $K(p,1)/p^2$. The results are illustrated in figure \ref{fig:2}.  The exact value at $p=1$ is given in theorem \ref{thm:qlimit}.   Theorem \ref{thm:8} for the continuized model provides the limiting value as $p\to0$, namely $\cosh(\pi \sqrt{7}/2)/(4 \pi) \doteq 2.53961.$

Note that a simple lower bound for $K(p,1)/p^2$ in all cases can be obtained from the observation that $E S^2_n \geqslant (E S_n )^2$. Then, since $n^{-2}S_n \to \frac12 p$ from \eqref{eq:paddingmean} and $n^{-4}E S^2_n \to \frac16 K(p,1)$, as will be shown in theorem \ref{thm:5} below, it follows that $K(p,1)/p^2 \geqslant 1.5$.  A similar calculation in terms of $E X^2_n$ yields the uniformly worse lower bound of $1$.   

Numerical values of the product moment $n^{-2} E(X_m X_n)$ for the $p$-adding process are shown in figure \ref{fig:3}. A simple limiting pattern emerges with a discontinuity at $m=n$,  at which the value drops by one third.  This phenomenon is explained in theorem \ref{thm:5} below.
\begin{figure}[h]
\centering
\begin{minipage}[t]{0.47\linewidth}
\centering
\includegraphics[width=0.92\linewidth,angle=90,trim=5cm 1cm 7cm 1cm]{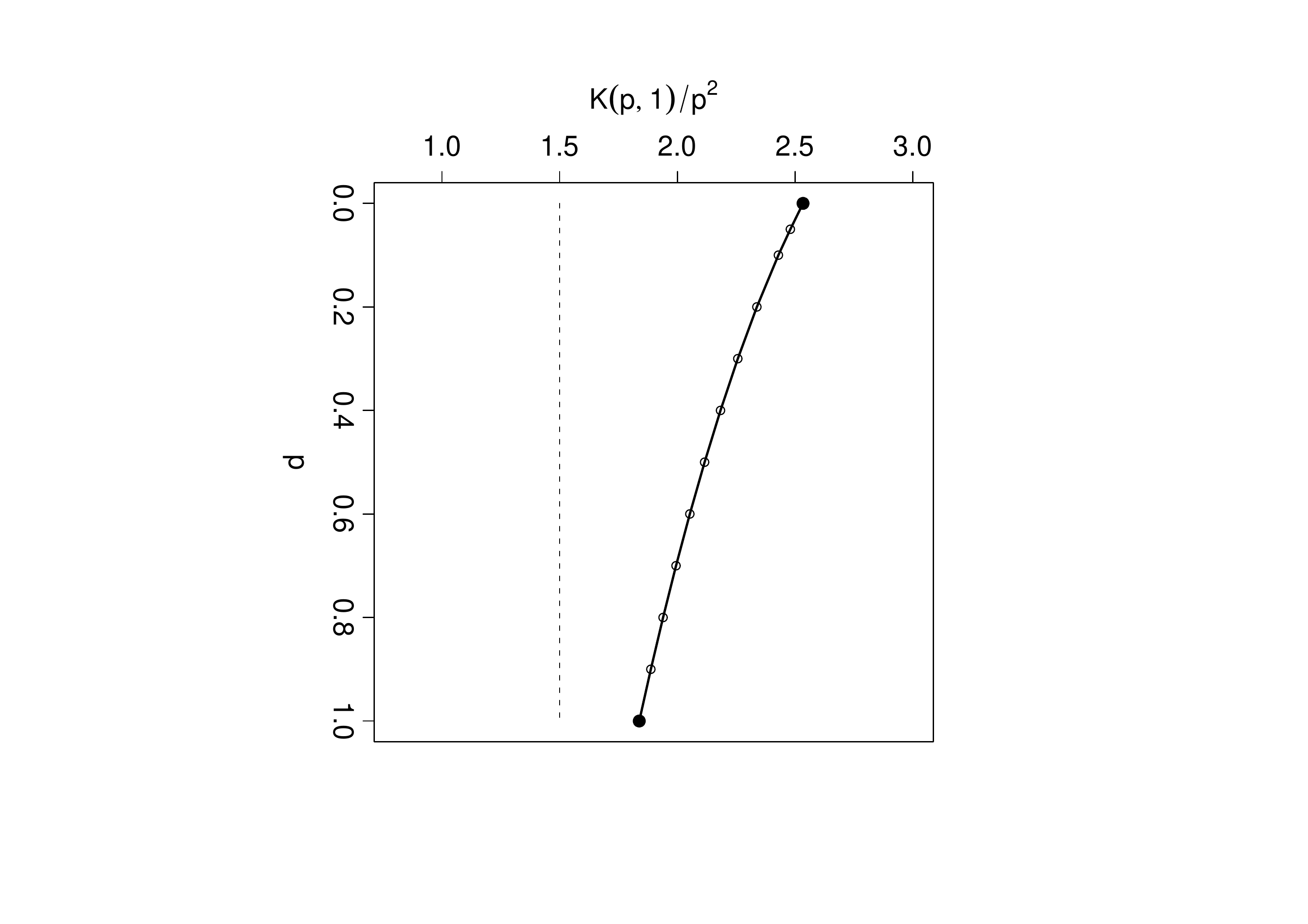}
\caption{Asymptotic growth rate of the second moment, $q_n$.} \label{fig:2}
\end{minipage}%
\hspace{0.5cm}%
\begin{minipage}[t]{0.47\linewidth}
\centering
\includegraphics[width=0.92\linewidth,angle=90,trim=5cm 1cm 7cm 1cm]{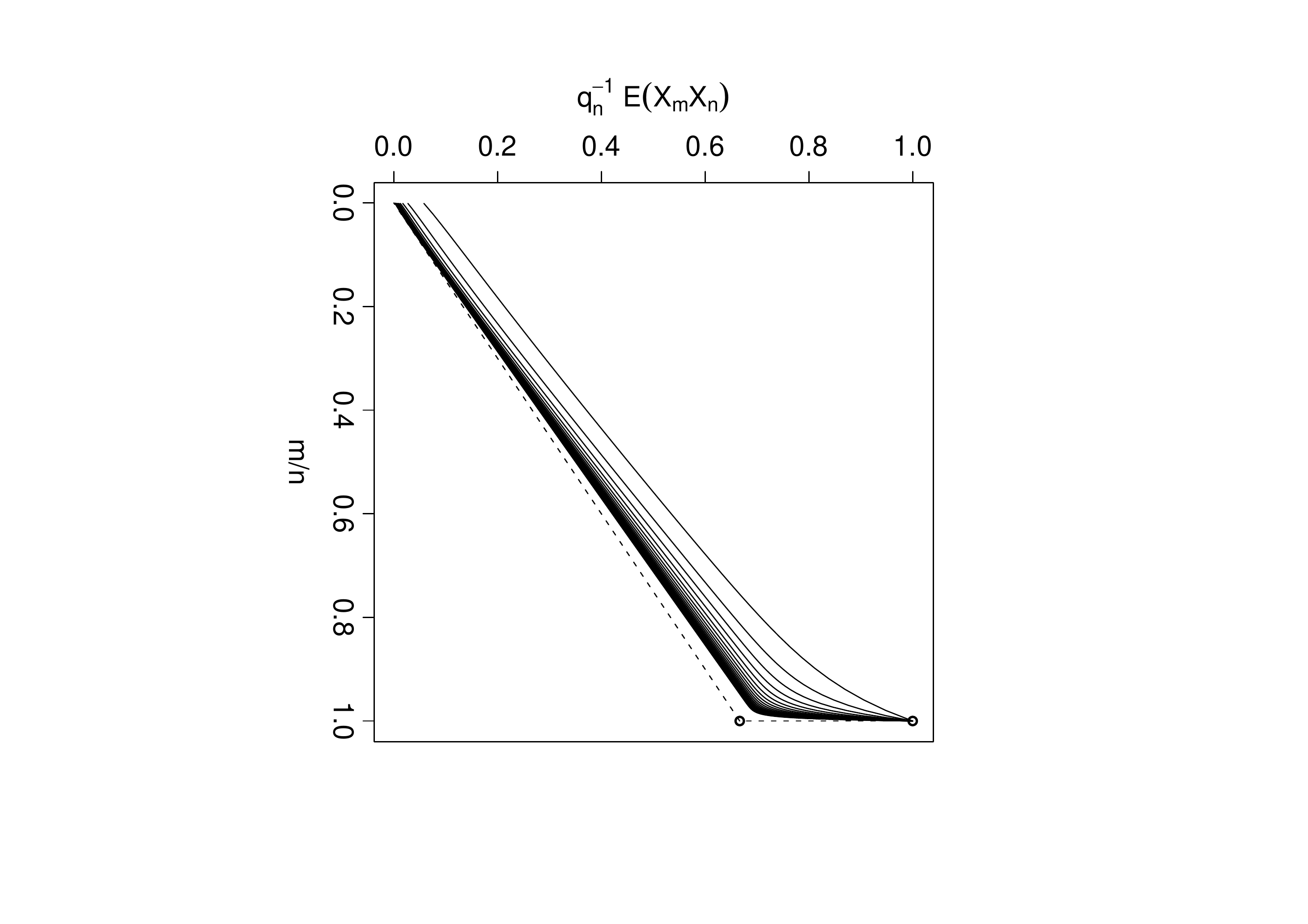}
\caption{Convergence of the product moment, $n=50,100,\dots,1000$; $p=0.2$.} \label{fig:3}
\end{minipage}
\end{figure}

In considering the product moment of the basic adding process we are aided by the existence the martingale that yields \eqref{eq:Xmart}.  Similar conclusions can be drawn for the $p$-adding process, as follows.
\begin{theorem}\label{thm:5}
 Let $m,n \to \infty$, with $m \leqslant n$ then the limiting product moment of the $p$-adding process $(X_n)$ with $p<1$ is given by
\begin{equation}
n^{-2} E(X_m X_n) \to \begin{cases}\frac{2}{3} \theta K & \text{if $m/n \to \theta \in (0,1)$},\\
                                     K& \text{if $m=n$},
                       \end{cases}
\end{equation}
where $K=K(p,x_1,\dots,x_r)$ is defined in \eqref{eq:qpaddinglimit} above.                         
\end{theorem}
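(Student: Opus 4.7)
The proof parallels Theorem~\ref{thm:2} but must compensate for the absence of an exact martingale when $p<1$.  Conditioning on $\mathcal{F}_n$ gives, for $n\geqslant m$,
\begin{align*}
E(X_m X_{n+1}) &= (1-p)\,E(X_m X_n) + \frac{2p}{n}\,E(X_m S_n),\\
E(X_m S_{n+1}) &= \left(1+\frac{2p}{n}\right) E(X_m S_n) + (1-p)\,E(X_m X_n),
\end{align*}
a two-dimensional linear recursion for $g_n := E(X_m X_n)$ and $h_n := E(X_m S_n)$ with initial data $g_m = q_m$ and $h_m = w_m + q_m$ in the notation of Theorem~\ref{thm:qpaddinglimit}.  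The diagonal case $m=n$ is then immediate from Theorem~\ref{thm:qpaddinglimit}.

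First I would strengthen Theorem~\ref{thm:qpaddinglimit} with two companion limits.  Dividing the $q_{n+1}$-recursion in \eqref{eq:qw} by $n^2$ and letting $n\to\infty$ yields $p_n/n^4 \to K/6$, the $p$-adding analogue of \eqref{eq:alimit}; substituting this into the $w_{n+1}$-recursion of \eqref{eq:qw} then produces $w_n/n^3 \to K/3$, the analogue of \eqref{eq;xslimit}.  Hence the boundary values satisfy $h_m \sim (K/3)\,m^3$ whereas $g_m \sim K m^2$ as $m\to\infty$.

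For $m<n$, viewing the $g$-recursion as a first-order linear equation with forcing $(2p/n)h_n$ yields the discrete Duhamel formula
$$g_n = (1-p)^{n-m} q_m + 2p\sum_{k=m}^{n-1} (1-p)^{n-1-k}\,\frac{h_k}{k}.$$
The geometric weight $(1-p)^{n-1-k}$ concentrates the sum in an $O(1)$-window at $k=n$, and $(1-p)^{n-m}\to 0$ whenever $n-m\to\infty$.  Substituting the natural ansatz $h_k\sim (K_0/2)\,m k^2$ gives $g_n \sim K_0 m n$, and feeding this back through $h_{n+1}-h_n = g_{n+1}$ confirms $h_n\sim (K_0/2)\,m n^2$, closing the bootstrap.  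Matching to the boundary value $h_m\sim (K/3)m^3$ as $m$ and $n-m$ both tend to infinity then forces $K_0 = 2K/3$, and so $E(X_m X_n)/n^2 \to (2/3)\theta K$.

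The main obstacle is rendering this matching rigorous.  Viewed as a non-autonomous $2\times 2$ linear system, the recursion has asymptotic eigenvalues $1$ and $1-p$, so its general solution decomposes into a polynomially-growing ``slow'' mode (along the direction $h/g\sim n/2$) and a geometrically-decaying ``fast'' mode (essentially along $(1,0)^{T}$).  The initial offset $g_m-(2K/3)m^2 = (K/3)m^2$ is absorbed entirely by the fast mode, whose $(1-p)^{n-m}$ decay vanishes for $\theta\in (0,1)$ and accounts precisely for the factor-of-$2/3$ drop at $\theta=1$ seen in figure~\ref{fig:3}.  I would render this quantitative by substituting $g_n = (2K/3)\,m n + c\,m^2(1-p)^{n-m} + o(mn)$ together with a companion expansion for $h_n$ into both recursions and tracking the error terms; equivalently, one can construct an approximate martingale $M_n = (S_n + \alpha_n X_n)/\beta_n$ with $\alpha_n\to (1-p)/p$ and $\beta_n\sim n^2$, and mimic the conditioning argument used for \eqref{eq:Xmart}.
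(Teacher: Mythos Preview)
Your preliminary limits $p_n/n^4\to K/6$ and $w_n/n^3\to K/3$, and the two coupled recursions for $g_n=E(X_mX_n)$ and $h_n=E(X_mS_n)$, are exactly what the paper derives.  Where you diverge is in how to extract the asymptotics from the system.  The paper observes that since $h_n=\sum_{k=1}^n c_{m,k}$, your first recursion
\[
c_{m,n+1}=(1-p)\,c_{m,n}+\frac{2p}{n}\sum_{k=1}^n c_{m,k}
\]
is \emph{identical in form} to the recursion \eqref{eq:paddingint} for the mean $m_n$, which was already solved explicitly in Theorem~\ref{thm:paddingmean}.  Hence $c_{m,n}$ admits the closed form
\[
c_{m,n}=(\nu+pn)\Bigl\{A(m)+B(m)\sum_{k=m}^{n-1}\frac{\nu^{k-1}}{k(\nu+pk)(1+pk)}\Bigr\},
\]
with $A(m),B(m)$ fixed by $c_{m,m}=q_m$ and $c_{m,m+1}=\nu q_m+(2p/m)E(X_mS_m)$.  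Because the tail sum $H_n=\sum_{k\leqslant n}\nu^{k-1}/[k(\nu+pk)(1+pk)]$ converges, the limit $c_{m,n}/n^2$ reduces to a finite-arithmetic computation using only $c_{m,m+1}/m^2\to\tfrac23K$, which follows from your own preliminary limit $n^{-3}E(X_nS_n)\to K/3$.

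Your Duhamel/bootstrap route is not wrong, but as you yourself flag, the matching step is not yet a proof: asserting $h_k\sim(K_0/2)mk^2$ and checking self-consistency does not establish the asymptotic, and the slow/fast mode decomposition you sketch would need a genuine stability estimate for the non-autonomous system to control the $o(mn)$ error uniformly in $m$.  All of this machinery is bypassed once you notice that the $n$-recursion is one you have already solved in closed form.  (Your suggested alternative---building an approximate martingale $M_n=(S_n+\alpha_nX_n)/\beta_n$---is in fact carried out in the paper, but only \emph{after} this theorem, in Lemma~\ref{thm:p-martingale}; the paper does not need it here.)
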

\begin{proof}  
First note that as a consequence of theorem \ref{thm:qpaddinglimit}, we have $t_n/n^3 \to \frac13 K$. Dividing the first equation in \eqref{eq:qw} by $n^2$, we then have $p_n/n^4 \to \frac16 K$ and dividing the second equation by $n^3$ we have $w_n/n^3 \to \frac13 K$.  By the usual conditioning arguments we also have
$E (X_{n+1}S_{n+1}) = \nu [E(X_n S_n) + q_n] +2p n^{-1} p_n$ and, dividing both sides by $n^3$, we have  $n^{-3} E(X_n S_n) \to \frac13 K$ as $n \to \infty$
 
Now let $c_{m,n} = E (X_m X_n)$ then, as in \eqref{eq:paddingint}, we have 
\begin{equation}\label{eq:paddingcov}  
c_{m,n+1} = \nu c_{m,n} + \frac{2 p}{n} \sum^n_{k=1} c_{m,k},\quad n \geqslant m \geqslant r,
\end{equation}
with solution
$$
c_{m,n}= (\nu+pn)\left\{A + B \sum_{k=r}^{n-1} \frac{\nu^{k-1}}{k(\nu+p k)(1 + pk)}\right\},\quad n\geqslant m \geqslant r,
$$
where $A$ and $B$ can be determined from the initial conditions $c_{m,m} = q_m$ and $c_{m,m+1} = \nu q_m +2 p m^{-1}\sum_{k=1}^{m} c_{m,k}$. Thus
\begin{equation} \label{eq:paddingcovsol}
c_{m,n}= (\nu+pn)\left\{\frac{c_{m,m+1}}{\nu+p(m+1)} \frac{H_{n-1}}{H_{m}} - \frac{q_m}{\nu+p m}\frac{H_{n-1}-H_{m}}{H_m}\right\}, \quad n\geqslant m \geqslant r,
\end{equation}
where $H_n = \sum_{k=r}^{n} \nu^{k-1}/[k(\nu+p k)(1 + pk)].$

Returning to \eqref{eq:paddingcov} with $n=m$ we have $c_{m,m+1} = \nu q_m + 2 p n^{-1} E(X_m S_m)$. Dividing by $m^2$ and using the earlier limit results we thus have $c_{m,m+1}/m^2 \to \frac23 K$.  Finally dividing \eqref{eq:paddingcovsol} by $n^2$, letting both $m \to \infty$ and $n \to \infty$ so that $m/n \to \theta < 1$ and noting that $\lim_{n \to \infty} H_n < \infty$,  we have $c_{m,n}/n^2 \to \frac23 \theta K$, as claimed.
\end{proof}
For the discrete time $p$-adding process we have the following:
\begin{lemma} \label{thm:p-martingale} Let 
$$M_n = \frac{p A_n}{n(1+np)}S_n + \frac{\nu  B_n}{(n+1)(2+np)}X_n, \quad \nu = 1-p,$$ 

where $A_n = \sum_{k=n}^\infty a_n/a_k$ with $a_k = \nu^{-k} k(\nu+kp)(1+kp)$ and $B_n = \sum_{k=n}^\infty b_n/b_k$ with 
$b_k = \nu^{-k} k(k+1)(\nu+kp+1)(2+kp)$
then $(M_n; n\geqslant r)$ is a martingale with respect to the increasing $\sigma$-fields $(\mathcal{F}_n;n\geqslant r)$ generated by $(X_n)$ or equivalently $(S_n)$. Furthermore, there exists a non-degenerate random variable $M$ with finite variance, such that $M_n$ converges to $M$ almost surely and in mean-square as $n \to \infty$, where 
$$E M = \frac{pA_r}{r(1+rp)}\sum_{k=1}^r x_k + \frac{\nu B_r x_r}{(r+1)(2+rp)}.$$  
Note that both $A_n$ and $B_n$ converge to $1/p$ as $n \to \infty$.
\end{lemma}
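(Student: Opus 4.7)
The proof breaks into three parts: check that $(M_n)$ is a martingale, verify $L^2$-boundedness and invoke the martingale convergence theorem, and compute $EM$. For the first, I would begin by computing the one-step conditional expectations from \eqref{eq:p_add},
\begin{align*}
E(X_{n+1}\mid\mathcal F_n) &= \nu X_n + \tfrac{2p}{n}S_n,\\
E(S_{n+1}\mid\mathcal F_n) &= \tfrac{n+2p}{n}S_n + \nu X_n.
\end{align*}
Writing $M_n = \phi_n S_n + \psi_n X_n$ with $\phi_n = pA_n/(n(1+np))$ and $\psi_n = \nu B_n/((n+1)(2+np))$, the condition $E(M_{n+1}\mid\mathcal F_n)=M_n$ reduces, on matching the coefficients of $S_n$ and $X_n$, to the coupled linear system
\begin{align*}
n\phi_n &= (n+2p)\phi_{n+1} + 2p\,\psi_{n+1},\\
\psi_n &= \nu(\phi_{n+1} + \psi_{n+1}).
\end{align*}
By construction $A_n$ and $B_n$ satisfy the telescoping identities $A_n = 1 + (a_n/a_{n+1})A_{n+1}$ and $B_n = 1 + (b_n/b_{n+1})B_{n+1}$; substituting these together with the explicit formulae for $a_k$ and $b_k$ into the coupled system reduces it to polynomial identities in $n$ and $p$. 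The particular products chosen for $a_k$ and $b_k$ are designed precisely so that these identities close, and confirming this is the most laborious but entirely routine step.

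For the asymptotic claims $A_n, B_n \to 1/p$: both ratios $a_n/a_{n+1}$ and $b_n/b_{n+1}$ tend to $\nu$, so the fixed-point equation $L = 1 + \nu L$ gives $L = 1/(1-\nu) = 1/p$. Dominated convergence applied to the tail sums $\sum_{k\geqslant n} a_n/a_k$, using the uniform bound $a_n/a_k \leqslant \nu^{k-n}$ (which holds because each factor in the ratio $n(\nu+np)(1+np)/[k(\nu+kp)(1+kp)]$ is at most one when $k\geqslant n$, and similarly for $b$), delivers the limit rigorously and also shows $A_n, B_n \leqslant 1/p$ uniformly. Consequently $\phi_n = O(n^{-2})$ and $\psi_n = O(n^{-2})$, so $|M_n| \leqslant c\,(S_n + |X_n|)/n^2$ for some deterministic constant $c$. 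Theorem \ref{thm:qpaddinglimit} together with the limits $p_n/n^4 \to K/6$ and $q_n/n^2 \to K$ established in the proof of Theorem \ref{thm:5} then give $\sup_n EM_n^2 < \infty$, and Doob's theorem yields almost-sure and mean-square convergence to a limit $M$ with finite variance. Non-degeneracy follows by computing $\lim EM_n^2$ from the same asymptotics of $p_n$, $w_n$, $q_n$ (together with $A_n, B_n \to 1/p$) and verifying it strictly exceeds $(EM)^2$.

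Finally, $EM = EM_r$ by the martingale property, and the stated expression is obtained by substituting $X_r = x_r$ and $S_r = s_r$ directly into $M_r$. The principal obstacle will be the coefficient-matching verification in the first step: the polynomial identities arising from the coupled system above are not transparent, and it is the neat closure of this algebra (and the implicit decoupling that it achieves between an $S_n$-weight and an $X_n$-weight) that motivates, and ultimately justifies, the particular Casoratian-style definitions of $a_k$ and $b_k$.
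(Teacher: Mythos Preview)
Your overall architecture is right—set up the coupled backward system for the coefficients $(\phi_n,\psi_n)$, establish $L^2$-boundedness from the second-moment asymptotics, and invoke Doob—and the $L^2$ and $EM$ parts are fine. The gap is in the martingale verification. After inserting the telescoping identities $A_n = 1 + (a_n/a_{n+1})A_{n+1}$ and $B_n = 1 + (b_n/b_{n+1})B_{n+1}$ into the coupled pair, the resulting equations do \emph{not} reduce to polynomial identities in $n$ and $p$: each equation still carries both $A_{n+1}$ and $B_{n+1}$ with non-matching coefficients. For instance, in $\psi_n=\nu(\phi_{n+1}+\psi_{n+1})$ the right-hand side has a nonzero $A_{n+1}$-term while the left, after telescoping $B_n$, has none; and the $B_{n+1}$-coefficients on the two sides differ as well (their ratio is $n(2+(n-1)p)/[(n+1)(2+np)]\ne 1$). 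The point is that $A_{n+1}$ and $B_{n+1}$ are not independent parameters—they are tied by a first-order relation that your two separate telescoping identities do not encode, so coefficient matching cannot close the system.

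The paper circumvents this by eliminating $\beta_n$ from the pair, obtaining a single second-order linear recursion for $\alpha_n$ of exactly the same form as \eqref{eq:paddingDE}. The Casoratian method already used there produces the general solution, and choosing the constants so as to kill the growing part isolates the decaying tail-sum solution $\alpha_n = pA_n/(n(1+np))$. The companion $\beta_n$ is then \emph{determined} by the relation $\beta_n=\tfrac{n\nu}{2p}(\alpha_n-\alpha_{n+1})$, which drops out of the coupled pair, and one checks that this delivers the stated $\psi_n$. If you wish to keep the pure verification route, you must supply precisely this compatibility relation between $A_n$ and $B_n$; the separate telescoping identities alone are not enough.
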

\begin{proof} Consider the sequence $(\alpha_n S_n + \beta_n X_n, n\geqslant r)$. In order for this to be a martingale we require that
$E (\alpha_{n+1} S_{n+1} + \beta_{n+1} X_{n+1} | \mathcal{F}_n) =  \alpha_n S_n + \beta_n X_n, n\geqslant r$. Referring to \eqref{eq:p_add} we have
$$\alpha_{n+1}\left(S_n + \frac{2p}{n}S_n + (1-p)X_n \right) + \beta_{n+1}\left(\frac{2p}{n}S_n + (1-p)X_n\right) = \alpha_n S_n + \beta_n X_n.$$ Equating the coefficients of $X_n$ and $S_n$ gives the pair of difference equations:
$$\beta_n= (1-p)(\alpha_{n+1} + \beta_{n+1}), \quad \alpha_n = \alpha_{n+1}(n+2p)/n + \beta_{n+1} 2p/n.$$ 
Eliminating $\beta_n$ to produce a second order difference equation for $\alpha_n$ and proceeding as in the solution of \eqref{eq:paddingDE} we have
\begin{align*}
\alpha_n &= \nu^{-n} (\nu+np)\left\{C_1+C_2 \sum_{k=1}^{n-1} \frac{\nu^k}{k(\nu+kp)(1+kp)}\right\}\\
&=\nu^{-n} (\nu+np)\left\{C_1+C_2\Phi_p - C_2 \sum_{k=n}^\infty \frac{\nu^k}{k(\nu+kp)(1+kp)}\right\},
\end{align*}
where $\Phi_p = \sum_{k=1}^\infty \nu^k/(k(\nu+kp)(1+kp))$. To find a positive solution that decreases with $n$ we start by setting $C_1 = -C_2 \Phi_p$. Rearranging terms then gives
$$\alpha_n = \frac{-C_2}{n(1+np)}\sum_{k=n}^\infty \frac{\nu^{k-n}n(\nu+np)(1+np)}{ k(\nu+kp)(1+kp)} = \frac{-C_2}{n(1+np)} A_n,$$
and we can now see that $A_n \to 1/p$ as $n \to \infty$.  The solution for $\beta_n$ follows similarly. Taken together the constants in the solutions can be determined to solve the original pair of difference equations.  

Finally from the calculations in the proof of theorem \ref{thm:qpaddinglimit}, we see that $E M_n^2$ is uniformly bounded and so the probabilistic limit follows from the martingale limit theorem  \citep{Doob}.\end{proof}

It follows easily that we have this result, paralleling that of lemma \ref{thm:martingale} :
\begin{corollary}  $ S_n/(n(n + 1))$  converges to $pM$ almost surely and in mean-square as $n \to \infty$, where $M$ is as defined in lemma \ref{thm:p-martingale}.
\end{corollary}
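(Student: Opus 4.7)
The corollary should follow directly from lemma \ref{thm:p-martingale} by isolating the $S_n$ contribution in $M_n$ and showing the $X_n$ contribution is asymptotically negligible. The plan is as follows.

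First I would write $M_n = \alpha_n S_n + \beta_n X_n$ with $\alpha_n = pA_n/[n(1+np)]$ and $\beta_n = \nu B_n/[(n+1)(2+np)]$, and compute the asymptotics of the two coefficients. Since $A_n \to 1/p$ and $(n+1)/(1+np) \to 1/p$, we get
\begin{equation*}
\alpha_n\, n(n+1) \;=\; \frac{pA_n(n+1)}{1+np} \;\longrightarrow\; 1/p,
\end{equation*}
so $\alpha_n \sim 1/[p\,n(n+1)]$. Similarly $\beta_n \sim \nu/[p^2 n^2]$.

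Next I would show $\beta_n X_n \to 0$ both in mean-square and almost surely. Theorem \ref{thm:qpaddinglimit} gives $E X_n^2 = O(n^2)$, so $E(\beta_n X_n)^2 = \beta_n^2\, E X_n^2 = O(1/n^2)$. Mean-square convergence to zero is immediate, and since $\sum_n E(\beta_n X_n)^2 < \infty$, by monotone convergence $\sum_n (\beta_n X_n)^2 < \infty$ almost surely, hence $\beta_n X_n \to 0$ a.s.

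Combining these with $M_n \to M$ (a.s.\ and in $L^2$) from lemma \ref{thm:p-martingale}, we get $\alpha_n S_n = M_n - \beta_n X_n \to M$ in both senses. Dividing by the deterministic factor $\alpha_n n(n+1)$, whose limit is $1/p$, yields
\begin{equation*}
\frac{S_n}{n(n+1)} \;=\; \frac{\alpha_n S_n}{\alpha_n\, n(n+1)} \;\longrightarrow\; pM
\end{equation*}
both almost surely and in mean-square. There is really no deep obstacle here, as the work was already done in lemma \ref{thm:p-martingale}; the only care needed is verifying the asymptotic constant $1/p$ for $\alpha_n n(n+1)$ and confirming the $L^2$ bound on $X_n$ is enough to kill the $\beta_n X_n$ term, which it comfortably is by the $O(n^2)$ growth of $E X_n^2$.
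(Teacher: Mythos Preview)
Your proposal is correct and follows essentially the same route as the paper: both arguments isolate the $\beta_n X_n$ term, use the $O(n^2)$ bound on $E X_n^2$ from Theorem~\ref{thm:qpaddinglimit} to show it vanishes in mean-square and almost surely, and then invoke $A_n\to 1/p$ to convert the convergence of $\alpha_n S_n$ into that of $S_n/(n(n+1))$. The only cosmetic difference is that the paper establishes the a.s.\ negligibility via Chebyshev plus Borel--Cantelli on $X_n/n^2$, whereas you use the equivalent observation that $\sum_n E(\beta_n X_n)^2<\infty$ forces $\beta_n X_n\to 0$ a.s.
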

\begin{proof}  From theorem \ref{thm:qpaddinglimit}, we have that $E X_n^2/n^2 \to K(p, x_1, . . . , x_r)$, as $n \to \infty$, and hence $E X_n^2/n^4 \to 0$  as $n \to \infty$, so that $X_n/n^2$ converges in m.s.\ to $0$. Also, by Chebyshov's inequality, $P(X_n/n^2 > a) < EX_n^2/(a^2 n^4)  \sim K/(a^2 n^2)$ for $a>0$ as $n \to \infty$, and the convergence of $\sum (1/n)^2$ implies the a.s. convergence of $X_n/n^2$ to $0$, by the first Borel-Cantelli lemma, as $a$ is arbitrarily small.

Since $B_n$ converges to $1/p$ as  $n \to \infty$, it follows that the second term $\nu B_n X_n/{(n + 1)(2 + n p)}$ in the definition of $M_n$ converges a.s. and in m.s.\ to $0$ and since the sum of two convergent sequences of random variables converges to the sum of the limiting variables both in m.s.\ and almost surely,  the assertion of the corollary follows immediately, when we note that $A_n$ converges to $1/p$ as $n \to \infty$.\end{proof}
The conclusions of section \ref{sec:samplepaths}, about the sample paths of Ulam's base process, are now seen to transfer in just the same way to the $p$-adding process, {\it mutatis mutandis}.  The existence of the convergent martingale was crucial in this.

\section{The continuized adding process}\label{sec:simplecont}

A familiar method for gaining insight into many discrete-time processes is to consider analogous problems in continuous time.  And of course, such processes are of natural interest in their own right. In this case the underlying idea is that the jumps of the discrete process $(X_n)$ should take place at the jump instants of a Poisson process $(N(t))$;  the process $(X_n)$ is then said to be subordinate to $(N(t))$.  Such continuized (or Poisson-regulated) processes have been used previously in analysing other history dependent random sequences \citep{CS} and are also discussed by \citet{Feller}.  We define the continuized random adding process thus:

\medskip
{\sc Definition 4.1}. Let $(T_r; r \geqslant 1)$ be the successive jump times of a Poisson process $(N(t), t > \tau
)$ where $\tau \geqslant 0 $ and $N(\tau) =0$. For notational convenience let $T_0= \tau$.  Without essential loss of generality, we will take the Poisson intensity $\lambda$ to be $1$.  Let $(U_r; r \geqslant 1)$ and $(V_r; r \geqslant 1)$ be independent sequences of independent random variables, such that $U_r$ and $V_r$ are uniformly distributed on $[0,T_r]$. With initial conditions $X(t) = x(t), 0 \leqslant t \leqslant \tau$, the process is defined by
\begin{align} 
X(t) &= X(T_{r-1}) \quad \text{for} \quad T_{r-1} \leqslant t < T_r, \nonumber\\
X(T_r) &= X(U_r) + X(V_r).
\end{align}

Note that many, more general, constructions are possible, in that 
\begin{itemize}
\item[(a)] we could permit $U_r$ and $V_r$ to have a distribution other than uniform,
\item[(b)] we could consider weighting factors so that
$$X(T_r) = A X(U_r) + B X(V_r),$$
where $A$ and $B$ are constants, or even random variables,
\item[(c)] the regulating Poisson process could be non-homogeneous, of rate $\lambda(t)$.
\end{itemize}
We return later to consider some of these more general problems.  

For the process $(X(t))$ of definition 4.1, we have this
\begin{theorem} \label{thm:7} 
Let $m(t) = E X(t)$ be the mean of $X(t)$, then for $t \geqslant \tau > 0$
\begin{equation}\label{meanDEsoln} m(t) = (1+t) \left(\frac{x(\tau)}{1+\tau} + C \int_\tau^t \frac{e^{-y}}{y(1+y)^2} dy\right),\end{equation}
where 
$$\frac{C e^{-\tau}}{\tau(1+\tau)} + \frac{2+\tau}{1+\tau} x(\tau) = \frac{2}{\tau} \int_0^\tau x(u) du.$$
When $\tau=0$, and $x(0)=1$, this yields 
\begin{equation}\label{meanDEspecial} m(t)=1+t. \end{equation}
\end{theorem}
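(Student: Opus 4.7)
The plan is to convert the definition of the continuized process into an integro-differential equation for $m(t)$, reduce it to a second-order linear ODE, find one obvious solution, and use reduction of order to obtain the full two-parameter family; the initial data then pin down the constants.

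First, I would condition on the jumps of $(N(t))$. Since $N$ has unit rate, the probability of a jump in $[t,t+dt]$ is $dt+o(dt)$. If a jump occurs at time $t$, the new value is $X(U)+X(V)$ with $U,V$ independently uniform on $[0,t]$; otherwise $X$ is unchanged. Taking expectations and letting $dt\to 0$ yields, for $t>\tau$,
\begin{equation*}
m'(t)+m(t)=\frac{2}{t}\int_0^t m(u)\,du,
\end{equation*}
with the convention that $m(u)=x(u)$ for $0\leqslant u\leqslant \tau$. Differentiating once with respect to $t$ and substituting the original equation back to eliminate the integral produces the linear second-order ODE
\begin{equation*}
t\,m''(t)+(t+1)m'(t)-m(t)=0,\qquad t>\tau.
\end{equation*}

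Next I would observe by direct substitution that $m_1(t)=1+t$ is a solution. To obtain a second, linearly independent solution, I would apply Abel's formula: writing the ODE as $m''+(1+1/t)m'-(1/t)m=0$, the Wronskian of any two solutions satisfies $W(t)\propto\exp\bigl(-\int(1+1/t)\,dt\bigr)=e^{-t}/t$. Reduction of order via $m_2=m_1\int W/m_1^2$ then gives
\begin{equation*}
m_2(t)=(1+t)\int \frac{e^{-y}}{y(1+y)^2}\,dy,
\end{equation*}
so the general solution on $[\tau,\infty)$ is the expression in \eqref{meanDEsoln} with constants $A$ and $C$ to be determined.

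Finally I would impose the two initial conditions. The condition $m(\tau)=x(\tau)$ immediately yields $A=x(\tau)/(1+\tau)$. For the second constant, evaluate the integro-differential equation at $t=\tau$: since $m(u)=x(u)$ for $u\in[0,\tau]$, this reads $m'(\tau)+x(\tau)=(2/\tau)\int_0^\tau x(u)\,du$. Differentiating the closed form for $m(t)$ and setting $t=\tau$ gives $m'(\tau)=A+C e^{-\tau}/(\tau(1+\tau))$, and substituting reproduces exactly the stated relation for $C$. For the special case $\tau=0$, $x(0)=1$, the integrand $e^{-y}/(y(1+y)^2)\sim 1/y$ near $0$ forces $C=0$ for a finite solution, while continuity at the origin gives $A=1$, yielding $m(t)=1+t$. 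The main obstacle is the derivation of the integro-differential equation with proper attention to the convention that $m$ coincides with the initial data on $[0,\tau]$; once that is in hand, the ODE analysis is routine.
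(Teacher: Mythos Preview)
Your proposal is correct and follows essentially the same route as the paper: derive the integro-differential equation by conditioning on the Poisson process, differentiate to obtain $t m'' + (1+t)m' - m = 0$, spot the particular solution $1+t$, complete via reduction of order, and fix constants from $m(\tau)=x(\tau)$ and $m'(\tau)=-x(\tau)+\tfrac{2}{\tau}\int_0^\tau x(u)\,du$. Your treatment is in fact slightly more explicit than the paper's (which simply says the complete solution ``follows routinely''), and your handling of the $\tau=0$ case via the non-integrable singularity of $e^{-y}/(y(1+y)^2)$ at the origin is a clean way to force $C=0$.
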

\begin{proof}
for small $h>0$, let $\mathcal{I}_{h,t}$ be the indicator of the event that $N(t+h) = N(t).$ Then by conditional expectation, for $t \geqslant \tau$, 
$$m(t+h) = E\{E[X(t+h)|\mathcal{I}_{h,t}]\} = (1-h)m(t) + h E\{X(U)+X(V)\} + o(h),$$
where $U$  and $V$ are uniformly and independently distributed over $[0,t]$.  Hence
$$m' + m = \frac{2}{t} \int_0^t m(u) du.$$
It follows that 
\begin{equation}\label{eq:mddot}
 t m'' + (1+t) m' - m =0,
\end{equation}
where $m'$ and $m''$ are the first and second derivatives of $m(t)$.

By inspection, $m(t) = 1+t$ is a particular solution of \eqref{eq:mddot}.  The complete solution  \eqref{meanDEsoln} follows routinely, on applying the initial conditions 
$$m(\tau) = x(\tau) \quad \text{and} \quad m'(\tau) = - x(\tau) + \frac{2}{\tau} \int_0^{\tau} x(u) du.$$
\end{proof}
We observe that the special case \eqref{meanDEspecial} essentially reproduces the behaviour of the discrete adding process started at $X_1=1$. 

For the second moment $q(t) = E(X^2(t))$, we have 
\begin{theorem} \label{thm:8} As $t \to \infty$, $q(t)$ grows quadratically with $t$.  In particular, when $\tau=0$ and $x(0)=1$, we have $q(t)/t^2 \to \cosh(\pi \sqrt{7}/2)/(4 \pi) \doteq 2.53961,$ as $t \to \infty$. The second moment is seen to have the same quadratic asymptotic growth behaviour as that of the discrete time processes, but with a larger constant; as perhaps is to be expected intuitively.  
\end{theorem}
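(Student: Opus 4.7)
My plan mirrors the proof of Theorem~\ref{thm:qlimit} in continuous time. Introduce $S(t) = \int_0^t X(u)\,du$, $q(t) = E X(t)^2$, $p(t) = E S(t)^2$, $s(t) = E[X(t) S(t)]$, and $Q(t) = \int_0^t q(u)\,du$. Conditioning on whether the regulating Poisson process jumps in $(t, t+h)$ and letting $h \to 0^+$ yields the coupled first-order system
\begin{align*}
q'(t) + q(t) &= \frac{2 Q(t)}{t} + \frac{2 p(t)}{t^2},\\
p'(t) &= 2 s(t),\\
s'(t) + s(t) &= q(t) + \frac{2 p(t)}{t},
\end{align*}
which are the continuous-time analogues of \eqref{eq:qn+1}--\eqref{eq:an+1}. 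The extra equation for $s$ is needed because the tight algebraic identity $E[S_n X_{n+1}] = 2 p_n/n$ of the discrete case is replaced here by a genuine dynamical relation.

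Next I would eliminate $p$, $s$ and $Q$ in turn (using $Q' = q$), producing a single fourth-order linear ODE with polynomial coefficients,
$$t^2 q^{(4)} + (2t^2 + 6t)\, q''' + (t^2 + 4t + 6)\, q'' - (2t + 6)\, q' + 2 q = 0.$$
Applying the methods of \citet{Adams,Birkhoff}, the trial $q = e^{\delta t} t^\sigma$ gives leading $t^2$-coefficient $\delta^2(\delta+1)^2$, forcing $\delta \in \{0, -1\}$. For $\delta = 0$ the reduced indicial polynomial is $(\sigma-1)(\sigma-2)$; for $\delta = -1$ a further descent yields the quadratic $\sigma^2 + 7\sigma + 14 = 0$ with complex roots $\sigma = (-7 \pm i\sqrt{7})/2$. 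The four fundamental asymptotic modes are therefore $t^2$, $t$, and $e^{-t} t^{(-7 \pm i\sqrt{7})/2}$; any solution satisfies $q(t) = A t^2 + B t + O(e^{-t}\cdot\mathrm{poly})$, establishing quadratic growth.

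For the specific case $\tau = 0$, $x(0) = 1$ I would then solve the ODE subject to $q(0) = 1$ and $q'(0) = 3$, the latter obtained by the direct short-time expansion $E X(t)^2 = 1 + 3t + O(t^2)$ (one jump in $(0,t)$ occurs with probability $t + O(t^2)$ and produces $X = 2$), together with regularity at the singular point $t = 0$ so as to discard the $1/t$ and $\log t$ branches. The emergence of $\sqrt{7}/2$ as the imaginary part of the decaying exponents is precisely what delivers the target constant, via the Gamma-function reflection identity
$$\bigl|\Gamma\bigl(\tfrac{1}{2} + i y\bigr)\bigr|^2 = \frac{\pi}{\cosh(\pi y)} \qquad \text{at } y = \tfrac{\sqrt{7}}{2}.$$
This plays the role that Euler's $\sinh$ product plays in Theorem~\ref{thm:qlimit}: the two exponentially decaying modes contribute complex-conjugate Gamma factors in the asymptotic matching, and the absolute-value identity collapses the matching coefficient to $\cosh(\pi\sqrt{7}/2)/(4\pi)$ after incorporating the $1/(4\pi)$ prefactor coming from the normalization of the Wronskian.

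The principal obstacle is precisely this matching step. In the discrete case the Casoratian and Euler's product furnished the coefficient almost for free; here one must identify a closed-form or integral representation of the unique analytic-at-$0$ solution and extract its coefficient at $t = \infty$. The cleanest route is likely a Laplace transform or, after a suitable substitution, reduction to a confluent hypergeometric or Whittaker equation whose standard asymptotic expansions already expose the Gamma factors at $\tfrac12 \pm i\sqrt{7}/2$. Failing that, a direct variation-of-parameters construction built from the two exponentially decaying modes will supply the connection formula and hence the constant.
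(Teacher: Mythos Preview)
Your approach coincides with the paper's. The coupled system and the fourth-order ODE are equivalent to what the paper derives (your $p$ is the paper's $Q=\iint c(u,v)\,du\,dv$, and your relation $s'+s=q+2p/t$ is the paper's $Q''+Q'=2q+4Q/t$); your coefficient $2t^2$ on $q'''$ is in fact correct---the paper's printed $t^2$ is a typo, as its own characteristic polynomial $\delta^4+2\delta^3+\delta^2=0$ confirms. The Erd\'elyi analysis and your subdominant indicial equation $\sigma^2+7\sigma+14=0$ are right.

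For the constant the paper takes exactly your Laplace-transform route: setting $g(s)=s^{-1}\int_0^\infty e^{-t/s}q(t)\,dt$ and applying a Tauberian theorem reduces the problem to the \emph{second}-order equation $s(s+1)^2g''+2(1-s^2)g'+2(s-3)g=0$. This has three regular singular points $0,-1,\infty$, so it lies in the Gauss hypergeometric class, not the confluent/Whittaker class you anticipate; writing $g(s)=(1+s)^{2+\beta}u(1+s)$ with $\beta=\tfrac12(1-i\sqrt7)$ brings $u$ to standard hypergeometric form, and the constant is read off from the connection formulae for $F(\beta,\beta\pm1;2\beta;\,\cdot\,)$ between $z=1$ and $z=\infty$. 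Your instinct that Gamma identities at the conjugate parameters produce the $\cosh(\pi\sqrt7/2)$ is correct in spirit, but the mechanism is the hypergeometric connection problem rather than the reflection formula $|\Gamma(\tfrac12+iy)|^2=\pi/\cosh(\pi y)$ or a Wronskian matching of the $e^{-t}$ modes.
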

\begin{proof}
Conditioning on events of the Poisson process $(N(t))$ during the interval $(t, t+h)$, as above, gives
\begin{equation} 
q' + q = E\{X^2(U)\} + E\{X^2(V)\} + 2 E\{X(U) X(V)\},
\end{equation}
where $U$ and $V$ are independently uniform, so that
\begin{equation} \label{eq:qdot}
q' + q = \frac{2}{t} \int^t_0 q(u) du +\frac{2}{t^2} \int_0^t \int_0^t c(u,v) du dv,
\end{equation}
where $c(u,v) = E\{X(u) X(v)\}.$

In addition, for $u<t$, by similar conditioning, we have 
\begin{equation} \label{eq:cdot1}
\frac{\partial c(u,t)}{\partial t} + c(u,t) = \frac{2}{t} \int_0^t c(u,y) dy,
\end{equation}
and, for $v < t$,
\begin{equation} \label{eq:cdot2}
\frac{\partial c(t,v)}{\partial t} + c(t,v) = \frac{2}{t} \int_0^t c(x,v) dx.
\end{equation}
Now define $Q(t) = \int_0^t \int_0^t c(u,v) du dv$, with first derivative
\begin{equation} \label{eq:Qdot}
Q' = \int_0^t c(u,t) du + \int_0^t c(t,v) dv.
\end{equation}
Differentiating again and substituting from \eqref{eq:cdot1},\eqref{eq:cdot2} and \eqref{eq:Qdot}, we obtain
\begin{equation} \label{eq:Qddot}
Q'' + Q' = 2 q + \frac{4}{t} Q.
\end{equation}
Eliminating $Q$ from \eqref{eq:Qddot} and \eqref{eq:qdot} gives 
\begin{equation}\label{eq:contqDE}
t^2 q^{(\text{iv})} + (6 t + t^2) q''' +(6+4t +t^2) q'' - (6+2t) q' + 2 q =0.
\end{equation}
Following \citet{Erdelyi}, we determine the asymptotic growth rate of $q(t)$, as $t \to \infty$, by substituting trial solutions of the form $q_1 = t^\sigma e^{\delta t}$ and $q_2  = t^\rho$; this procedure yields the leading term in an asymptotic expansion developed in inverse powers of $t$.  For $q_1$, we find that the leading term is zero when $\delta^4 + 2 \delta^3 + \delta^2 = 0$ whence $\delta = 0$ or $\delta = -1$.  We therefore consider substitutions of the form $q_2$, which then gives $(\rho-2)(\rho -1) = 0$. Thus $q(t) \sim K t^2$, as asserted, where $K$ is a constant depending on the initial conditions $\{x(t), 0 \leqslant t \leqslant \tau\}$ .

For the base case where $\tau=0$ and $x(0)=1$, we can determine the coefficient $K$ explicitly.  We start by defining the transform $g(s) = s^{-1} \int_0^\infty e^{-t/s} q(t) dt$ for $s>0$.  The function $g$ is well defined since we have established that $q(t) \sim K t^2$. The asymptotic behaviours of $g$ and $q$ are related by a Tauberian theorem \citep[page 220]{Feller}, namely 
\begin{equation}\label{eq:Tauber}
q(t) \sim K t^\alpha,\; \text{as $t \to \infty$} \quad \text{if and only if} \quad g(s) \sim K s^\alpha \Gamma(\alpha+1),\; \text{as $s \to \infty$}.
\end{equation}
For the base case, using \eqref{eq:qdot} and \eqref{eq:Qdot}, the initial conditions for $q$ are found to be $q(0) = 1$, $q'(0) =3$, $q''(0) = 8/3$ and $q'''(0) = 4/9$.  Applying the transform to \eqref{eq:contqDE}, after some reduction, we have 
\begin{equation}\label{eq:contgDE}
s(s+1)^2 g''(s) + 2(1-s^2) g'(s) + 2(s-3)g(s) = 0,      
\end{equation}
with $g(0)=1$ and $g'(0) = 3$.  The method of Frobenius provides solutions for $g(s)$ of the form $C_1 P(s) + C_2 R(s)$ where 
\begin{eqnarray*}
P(s) &=& 1 + 3 s + 8s^2/3  + \dots,\\
Q(s) &=& \log(s)[2 + 6 s + 16 s^2/ 3 + \dots] + s^{-1} [ 1+ 4 s + 2 s^2 + \dots].
\end{eqnarray*}
Clearly $P(s)$ is the required solution of \eqref{eq:contgDE} but, expressed as a power series, it provides no immediate access to the asymptotic growth of $g(s)$. An alternative pair of solutions can be found by shifting to the singular point $s=-1$, i.e.\ by defining $g(s) = u(1+s)(1+s)^{2+\beta}$ and considering the differential equation satisfied by $u$.  Taking $\beta$ to be $\frac12(1-i \surd 7)$ or its complex conjugate, we find
\begin{equation}
w(w-1) u''(w) + 2[(\beta+1)w - \beta]u'(w) + 2\beta^2 u(w) = 0,
\end{equation}
a hypergeometric differential equation \citep[\S 15.5.1]{AS} with a solution $G(\beta,w) = F(\beta,\beta+1, 2\beta, w)$ where 
$F$ is the hypergeometric function defined in \citep[\S 15.1.1]{AS}.  It follows that \eqref{eq:contgDE} has solution
\begin{equation}\label{eq:gsoln}
g(s) = B_1(1+s)^{2+\beta} G(\beta,1+s) +B_2(1+s)^{2+\bar{\beta}} G(\bar{\beta},1+s),
\end{equation}
where $\bar{\beta}$ is the complex conjugate of $\beta$ and $(B_1,B_2)$ are complex constants chosen so that $g(s) = P(s)$.  

First note that the general Frobenius solution has the property that $s[C_1P(s)+C_2Q(s)]$ converges to $C_2$ as $s \to 0$. So, in order that $C_2 = 0$ we must have $s g(s) \to 0$ as $s\to 0$ in \eqref{eq:gsoln}.  Furthermore using \citet[\S 15.3.3]{AS} we have 
$- s G(\beta ,1+s)= F(\beta,\beta-1, 2\beta,1+s),$
and using \citet[\S 15.1.20]{AS}, the limiting value of the right-hand side of this equation, as $s \to 0$,  is given by 
$A(\beta) = \Gamma(2\beta)/[\Gamma(\beta)\Gamma(\beta+1)].$  
It follows that as $s \to 0$, $- s g(s) \to B_1 A(\beta) +B_2 A(\bar{\beta}) = 0$ and hence $B_2/B_1 = -A(\beta)/A(\bar{\beta})$. 

The solution we require is then 
\begin{equation}\label{eq:B0}
B_0[A(\bar{\beta})(1+s)^{2+\beta} G(\beta,1+s) - A(\beta)(1+s)^{2+\bar{\beta}} G(\bar{\beta},1+s)],
\end{equation}
where $B_0$ has to be found so that $g(s)$ satisfies the initial condition  $g(0) = 1$. From \citet[\S 15.3.12]{AS} the constant term in the expansion  of  $G(\beta, 1+s)$ about $s=0$, i.e.\ the term with $n=0$, is given by 
$$\frac{\Gamma(2 \beta)}{\Gamma(\beta-1)\Gamma(\beta)} [\psi(\beta) + \psi(\beta+1) - \psi(1) - \psi(2)] = H(\beta) \quad \text{(say)},$$
where $\psi(z) = d/{dz}\log \Gamma(z).$  Thus the constant term on expanding  $(1+s)^{2+\beta}  G(\beta, 1+s)$ is $H(\beta) - (2+\beta) A(\beta)$, and after some simplification, the constant term in \eqref{eq:B0} is 
$$B_0\{A(\bar{\beta})[H(\beta)- \beta A(\beta)] - A(\beta) [ H(\bar{\beta}) - \bar{\beta} A(\bar{\beta})]\} = B_0 i \surd 7,$$
from which it follows that $B_0 = (i \surd 7)^{-1}.$  

We now have the required solution explicitly in the form \eqref{eq:B0}.  It remains to determine the asymptotic behaviour as $s \to \infty$.  From \citet[\S 15.3.4]{AS} we have $G(\beta, 1+s) = (-s)^{-\beta} F(\beta,\beta-1,2 \beta, s^{-1}(1+s))$, so that,  from the definition of $A(\beta)$,
$$ \lim_{s \to \infty}(1+s)^\beta G(\beta, 1+s) = \lim_{s \to \infty} \left(\frac{1+s}{-s}\right)^\beta A(\beta) =  i e^{\frac12 \pi \surd 7} A(\beta).$$
Consequently using the form \eqref{eq:B0} 
\begin{equation}
\lim_{s \to \infty} \frac{g(s)}{(1+s)^2} =  \frac{A(\bar{\beta}) A(\beta)}{ \surd 7}\left[ e^{\frac12 \pi \surd 7} - e^{-\frac12 \pi \surd 7} \right]
= \frac{\cosh^2(\frac12 \pi \surd 7)}{2 \pi \sinh(\pi \surd 7 )} \times \textstyle{2 \sinh(\frac12 \pi \surd 7)}.
\end{equation}
Therefore $g(s)/s^2 \to \cosh( \frac12 \pi \surd 7) /(2 \pi)$ and from the Tauberian relation \eqref{eq:Tauber} with $\alpha = 2$ we have $K = \cosh( \frac12 \pi \surd 7) /(4 \pi)$ as claimed.   \end{proof}

For the product-moment function $c(s,t) = E\{X(s) X(t)\}$ we have this.
\begin{theorem} \label{thm:9}
For $\tau \leqslant s < t$
\begin{equation} \label{eq:csoln}
c(s,t) = (1+t) \left\{ \frac{q(s)}{1+s} + e ^s\left[(1+s)Q'-(2+s)s q \right] \int_s^t\frac{e^{-y}}{y(1+y)^2} dy\right\},
\end{equation}
and if $s, t \to \infty$, with $s \leqslant t$  then, 
\begin{equation}
t^{-2} c(s,t) \to \begin{cases}\frac{2}{3} \theta K & \text{if $s/t \to \theta \in (0,1)$},\\
                                     K& \text{if $s=t$}.
                       \end{cases}
\end{equation}
\end{theorem}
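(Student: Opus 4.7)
The plan is to observe that $c(s,t)$ as a function of $t$ (with $s$ fixed) satisfies the same ODE as the mean $m(t)$ of Theorem \ref{thm:7}, solve it explicitly to verify \eqref{eq:csoln}, and then extract the two asymptotic limits from that formula.

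For the explicit form, I fix $s \geq \tau$ and regard $c(s,t)$ as a function of $t$ for $t \geq s$. Starting from \eqref{eq:cdot1}, differentiating once with respect to $t$ eliminates the integral and yields
$$t\,\frac{\partial^2 c(s,t)}{\partial t^2} + (1+t)\,\frac{\partial c(s,t)}{\partial t} - c(s,t) = 0,$$
which is precisely equation \eqref{eq:mddot}. By the analysis in the proof of Theorem \ref{thm:7}, its general solution takes the form
$$c(s,t) = (1+t)\left\{A(s) + B(s)\int_s^t \frac{e^{-y}}{y(1+y)^2}\,dy\right\}.$$
The boundary condition $c(s,s) = q(s)$ immediately gives $A(s) = q(s)/(1+s)$. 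For the $t$-derivative at $t=s$, I return to \eqref{eq:cdot1} itself; using the symmetry $c(s,y) = c(y,s)$ together with \eqref{eq:Qdot}, I identify $\int_0^s c(s,y)\,dy = \tfrac12 Q'(s)$. Differentiating the explicit form, equating, and solving for $B(s)$ yields $B(s) = e^s\bigl[(1+s)Q'(s) - s(2+s)q(s)\bigr]$, producing \eqref{eq:csoln}.

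For the asymptotic assertion, the diagonal case $s=t$ is immediate from Theorem \ref{thm:8}. When $s/t \to \theta \in (0,1)$, I first need the growth rate of $Q$. Inserting $q(t) \sim Kt^2$ and the trial form $Q(t) \sim Ct^4$ into \eqref{eq:qdot} and balancing $t^2$ terms gives $K = \tfrac{2K}{3} + 2C$, so $C = K/6$ and $Q'(s) \sim \tfrac{2K}{3}s^3$. Consequently the bracket defining $B(s)$ satisfies
$$(1+s)Q'(s) - s(2+s)q(s) \sim \bigl(\tfrac{2K}{3} - K\bigr)s^4 = -\tfrac{K}{3}s^4,$$
so $B(s) \sim -\tfrac{K}{3}e^s s^4$. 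A standard Laplace estimate gives $\int_s^{\infty} e^{-y}/[y(1+y)^2]\,dy \sim e^{-s}/s^3$, and since $t - s = s(1/\theta - 1) \to \infty$ the tail beyond $t$ is exponentially smaller and can be discarded. Combining,
$$c(s,t) \sim (1+t)\left\{\frac{Ks^2}{1+s} - \frac{K}{3}s\right\} \sim Kst - \tfrac{K}{3}st = \tfrac{2}{3}Kst,$$
so $c(s,t)/t^2 \to \tfrac{2}{3}\theta K$, as claimed.

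The main obstacle is ensuring that the leading-order cancellation producing the factor $\tfrac{K}{3}$ is not corrupted by the subleading terms in $q(s)$ and $Q'(s)$, since those enter $B(s)$ pre-multiplied by the exponentially large factor $e^s$. A clean way to discharge this sensitivity is to develop one-term-more expansions for $q$ and $Q'$ from \eqref{eq:qdot} and from $Q'' + Q' = 2q + 4Q/t$, or equivalently to perform a single integration by parts on the integral in \eqref{eq:csoln} that trades the $e^s$ for a factor of $s^{-1}$, after which the residual error terms are manifestly of lower order than the retained $\tfrac{2}{3}\theta K$.
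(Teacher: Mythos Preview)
Your proposal is correct and follows essentially the same route as the paper: both derive the second-order ODE for $c(s,\cdot)$ from \eqref{eq:cdot1}, identify it with \eqref{eq:mddot}, solve with the two boundary data at $t=s$, and then feed in $q(s)\sim Ks^2$, $Q'(s)\sim \tfrac{2K}{3}s^3$ together with the Laplace asymptotic of the integral.

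One comment on your final paragraph: the worry you flag is not a genuine obstacle. The exponentially large prefactor $e^{s}$ in $B(s)$ is cancelled \emph{exactly} by the $e^{-s}$ coming from the integral, so what remains is the bracket $[(1+s)Q'-s(2+s)q]$ multiplied by an algebraic factor $\sim s^{-3}$. Since Theorem~\ref{thm:8} gives $q(s)=Ks^{2}+o(s^{2})$ and hence $Q'(s)=\tfrac{2K}{3}s^{3}+o(s^{3})$, a direct expansion shows the bracket equals $-\tfrac{K}{3}s^{4}+o(s^{4})$ with no catastrophic cancellation (the two leading $s^{4}$ terms have different constants, $\tfrac{2K}{3}$ and $K$, whose difference is nonzero). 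Thus the product is $-\tfrac{K}{3}s(1+o(1))$, and no second-order expansion of $q$ or $Q'$ is needed. The integration-by-parts you mention is exactly what the paper uses to obtain the leading term $e^{-s}/(s(1+s)^{2})$ of the integral, but it is a convenience rather than a necessity for rigour here.
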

\begin{proof}
From \eqref{eq:cdot1} we have 
$ t c'' + (1+t) c' - c =0,$ where $c = c(u,t)$, $u <t$ and $c'$ indicates that differentiation is with respect to $t$.
This is essentially \eqref{eq:mddot}, so that we have as before
\begin{equation}\label{eq:generalcsoln} c(s,t) = (1+t) \left[A(s) + B(s) \int_s^t \frac{e^{-y}}{y(1+y)^2}dy\right],
\end{equation}
for suitable $A(s)$ and $B(s)$.  The boundary conditions at $t=s$ are
$$
c(s,t)\big| _{t=s} = q(s) \quad \text{and} \quad \frac{\partial c(s,t)}{\partial t}\big| _{t=s} = Q'(s)/s - q(s),
$$
the latter following from \eqref{eq:cdot1} and \eqref{eq:Qdot}. The required result \eqref{eq:csoln} then follows.

Now set $s= \theta t$ in \eqref{eq:csoln}, where $\theta$ is a fixed number between $0$ and $1$. As $s,t \to \infty$, either integrating by parts or by use of $8.215$ in \citet{GR}, we find that the leading term in the asymptotic expansion of the integral term is $(\theta t)^{-3} e^{-\theta t}$.  
From theorem \ref{thm:8}, we have $q \sim Kt^2$ and hence $\int_0^t q(u) du \sim K t^3/2$, so that from \eqref{eq:qdot} $Q \sim  K t^4/6$  and hence $Q' \sim  2K t^3/3$.
Substituting these asymptotic results in \eqref{eq:csoln}, after some reduction, we have 
$$ \lim_{t \to \infty} t^{-2} c(\theta t,t ) = \textstyle{\frac23} \theta K,$$
as required. Once again we note that this is similar to the behaviour of the product-moment in the discrete case.  
\end{proof}
For the third moment $E\{X^3(t)\}$, we remark that a similar asymptotic analysis may be pursued. Introducing the notation
$S_j(t) = E\{\int_0^t X^j(u)du\}$ and 
$$\alpha_j = E\{X^j(t) [S_1(t)]^{3-j}\},\quad \beta_j = E\{S_j(t) [S_1(t)]^{3-j}
\},\quad\gamma_j = E\{X^j(t)S_{3-j}(t)\},$$
and using the usual conditional expectation arguments, we have
\begin{eqnarray*}
\alpha_0'&=& 3 \alpha_1, 
\quad \alpha_1' = -\alpha_1 +2 \alpha_2 +\frac{2}{t}\alpha_0,
\quad \alpha_2' =  -\alpha_2 + \alpha_3 + \frac{2}{t} \beta_2 + \frac{2}{t^2} \alpha_0,
\\
\quad \alpha_3' &=& -\alpha_3 +\frac{2}{t} \beta_3 +\frac{6}{t}\beta_2, 
\quad \beta_2'= \alpha_2 + \gamma_1 ,
\quad \beta_3' = \alpha_3, 
\quad \gamma_1' = -\gamma_1 + \alpha_3 + \frac{2}{t} \beta_2. 
\end{eqnarray*}
Reducing this system to a single differential equation for $\beta_3 = E\{\int_0^t X^3(u)du\}$ yields
\begin{equation}
\begin{split}
t^4 \beta_3^\text{(vii)}+4(t+4)t^3 \beta_3^\text{(vi)}
+2(3 t^2+21 t+37) t^2 \beta_3^\text{(v)}\\
 +2(2t^3+15t^2+40t+54)t \beta_3^\text{(iv)}
+(t^4-2t^3-44t^2-80t+36)\beta_3'''\\
-(6t^3+32t^2+72t+112)\beta_3''
+(18t^2+92t+136)\beta_3' -12(2t+3)\beta_3 = 0.
\end{split}
\end{equation}
Again following \citet{Erdelyi} we consider the asymptotic expansion of $\beta_3$ developed in inverse powers of $t$ for large $t$, and find the leading  term by substituting trial solutions of the form $\beta_3 = t^\sigma e^{\delta t}$ and $t^\rho.$  These show that $\delta$ is either $0$ or $-1$ and $\rho $ is either $2$, $3$ or $4$.  We conclude that $E\{\int_0^t X^3(u)du\}$ grows as $t^4$, and hence that $E\{X^3(t)\}$ grows as $t^3$.

As with the adding and $p$-adding processes a martingale is available:  
\begin{lemma}
Let
$$M(t) = \frac{A(t)}{t(1+t)} S(t) + \frac{B(t)}{t(2+t)}  X(t), \quad t \geqslant \tau,$$
where $S(t) = \int_0^t X(v)dv$ and where  $A(t) = \int_t^\infty a(t)/a(v) dv $ with $a(v) = e^{-u} v(1+v)$ and $B(t) \int_t^\infty b(t)/b(v) dv $ with $b(v) = e^{-v} v^2(2+v)^2$.  Then $(M(t); t\geqslant \tau)$ is a martingale with respect to the increasing $\sigma$-fields $(\mathcal{F}(t);t\geqslant \tau)$ generated by $(X(t))$ or equivalently $(S(t))$. Furthermore, there exists a non-degenerate random variable $M$ with finite variance, such that $M(t)$ converges to $M$ almost surely and in mean-square as $t \to \infty$, where 
$$E M = \frac{A(\tau)}{\tau(1+\tau)}\int_{0}^\tau x(v)dv + \frac{B(\tau) x(\tau)}{\tau(2+\tau)}.$$  
Note that both $A(t)$ and $B(t)$ converge to $1$ as $t \to \infty$.
\end{lemma}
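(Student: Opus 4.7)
The plan is to follow Lemma~\ref{thm:p-martingale}, seeking deterministic coefficients $\alpha(t),\beta(t)$ so that $M(t)=\alpha(t)S(t)+\beta(t)X(t)$ is a martingale, matching them to the stated formulas, verifying $L^{2}$-boundedness, and invoking Doob. First I would compute the infinitesimal drift of $M(t)$. Between Poisson jumps $S'(t)=X(t)$ and $X$ is constant, while at a jump (rate~$1$) one has $E[X(t^{+})-X(t^{-})\mid\mathcal{F}_{t^{-}}]=2S(t)/t-X(t)$ since $U,V$ are independent uniforms on $[0,t]$. The generator applied to $M$ is therefore
\begin{align*}
\mathcal{L}M(t)=\bigl[\alpha'(t)+2\beta(t)/t\bigr]S(t)+\bigl[\alpha(t)+\beta'(t)-\beta(t)\bigr]X(t),
\end{align*}
and, since $S$ and $X$ can be varied independently by choice of path, requiring $\mathcal{L}M\equiv 0$ collapses to the coupled ODEs $\alpha'+2\beta/t=0$ and $\beta'-\beta+\alpha=0$.

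Eliminating $\beta=-t\alpha'/2$ yields the single second-order equation $t\alpha''(t)+(1-t)\alpha'(t)-2\alpha(t)=0$, which admits the (non-integrable) solution $(1+t)e^{t}$ by inspection, paralleling the particular solution $m(t)=1+t$ of the mean equation~\eqref{eq:mddot}. Reduction of order, using the normalised Wronskian $e^{t}/t$, produces the decaying linearly independent solution in the Casoratian-quotient form $a(t)\int_{t}^{\infty}dv/a(v)$ after identification of the kernel $a$, giving $\alpha(t)=A(t)/(t(1+t))$. Substituting back and performing a parallel reduction of order on the resulting equation for $\beta$ gives the companion expression $\beta(t)=B(t)/(t(2+t))$ with kernel $b$. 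Integration by parts on each ratio yields $A(t),B(t)\to 1$ as $t\to\infty$, so $\alpha(t)$ and $\beta(t)$ decay like $1/t^{2}$.

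For the convergence statements, Theorem~\ref{thm:8} provides $EX(t)^{2}=O(t^{2})$, and dividing \eqref{eq:qdot} by $t^{2}$ and passing to the limit gives $ES(t)^{2}=O(t^{4})$ with leading constant $K/6$. Together with the $O(t^{-2})$ decay of $\alpha,\beta$ and a Cauchy--Schwarz bound on the cross term, this makes $EM(t)^{2}$ uniformly bounded, so the martingale convergence theorem~\citep{Doob} supplies both almost-sure and mean-square limits, and the non-degeneracy of $M$ follows from the strict positivity of $\lim EM(t)^{2}$. The mean formula is immediate from $EM=EM(\tau)=M(\tau)$ after substituting $X(\tau)=x(\tau)$ and $S(\tau)=\int_{0}^{\tau}x(v)\,dv$. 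I expect the main obstacle to be the clean identification of the decaying solution and its partner with the stated integral forms, together with the asymptotic analysis showing $A(t),B(t)\to 1$; once this ODE bookkeeping is complete, the probabilistic conclusion proceeds in direct analogy with Lemma~\ref{thm:martingale}.
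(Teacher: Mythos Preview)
Your proposal is correct and follows essentially the same route as the paper: seek $M(t)=\alpha(t)S(t)+\beta(t)X(t)$, derive the coupled system $\alpha'+2\beta/t=0$, $\alpha+\beta'-\beta=0$ (the paper obtains this by differentiating $E[M(t)\mid\mathcal{F}(u)]$ in $t$, which is equivalent to your generator computation), solve for the decaying solutions in the stated integral form, and close with $L^{2}$-boundedness and Doob. The only cosmetic differences are that the paper writes down $\alpha$ and $\beta$ directly rather than passing through the second-order equation and reduction of order, appeals to dominated convergence rather than integration by parts for $A(t),B(t)\to 1$, and cites Theorem~\ref{thm:9} rather than Theorem~\ref{thm:8} plus~\eqref{eq:qdot} for the second-moment bound.
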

\begin{proof}
From theorem \ref{meanDEsoln} we know that 
$E X(t) | \mathcal{F}(u)$ and hence $E S(t) |\mathcal{F}(u)$ depend only on $X(u)$ and $S(u)$ for $\tau \leqslant u \leqslant t$. 

Now consider the random process $(M(t) = \alpha(t)S(t) + \beta(t)X(t), t \geqslant \tau)$ where $\alpha$ and $\beta$ are differentiable functions. Let $m(t) = E X(t) | \mathcal{F}(u)$ and $\theta(t) = E S(t) |\mathcal{F}(u)$. The conditional expectation $E M(t)| \mathcal{F}(u)$ is then $\alpha(t)\theta(t) + \beta(t)m(t))$.

For $(M(t))$ to be a martingale we require that $$E\, \big(\alpha(t) S(t)  + \beta(t) X(t)\big)|\mathcal{F}(u) = \alpha(u)S(u) + \beta(u)X(u), \tau \leqslant u \leqslant t$$ 
in particular the left-hand side should not depend on $t$, and consequently
\begin{align*}
\frac{d}{dt} \big(\alpha \theta + \beta m \big) &= \alpha'\theta + \alpha\theta' + \beta' m + \beta m'\\
&=\alpha'\theta + \alpha m + \beta' m + \beta \left(\frac{2}{t}\theta-m\right)\\
&=\theta\left(\alpha' +\frac{2}{t}\beta\right) + m(\alpha + \beta' -\beta) =0
\end{align*} 
Solving the pair of differential equations $(\alpha' +\frac{2}{t}\beta =0; \alpha + \beta' -\beta=0)$, we have 
$$\alpha = (1+t)e^{t}\int_t^\infty \frac{e^{-v}}{v(1+v)^2} dv = \frac{1}{t(1+t)}\int_t^\infty\frac{e^{t-v}t(1+t)^2}{v(1+v)^2} dv = \frac{A(t)}{t(1+t)}$$
and 
$$\beta = t(2+t)e^{t}\int_t^\infty \frac{e^{-v}}{v^2(2+v)^2} dv = \frac{1}{t(2+t)}\int_t^\infty\frac{e^{t-v}t^2(2+t)^2}{v^2(2+v)^2} dv = \frac{B(t)}{t(2+t)}.$$
Both of the integrals $A(t)$ and $B(t)$ converge to 1 as $t \to \infty$ by dominated convergence. 

Finally from theorem \ref{thm:9}, we see that $E M^2(t)$ is uniformly bounded and so the probabilistic limit follows from the martingale limit theorem.  
\end{proof}
Recalling lemma \ref{thm:p-martingale} and its corollary at the end of section \ref{sec:3}, it is seen, by exactly the same argument, that $S(t)/t^2$ converges to $M$ a.s. and in  m.s. as $t \to \infty$.   And this implies very similar conclusions for the behaviour of the sample paths of the continuous-time process as that given in section \ref{sec:2} for Ulam's discrete-time process.  Once again, we see the great utility of a suitable martingale.
\section{Generalized random adding}

A natural generalization of the simple adding process is to allow weighting and non-uniform selection from the past.  We define such a process thus:

\medskip
{\sc Definition 5.1}. With the notation and structure of definition 4.1, at jump times $(T_n)$, 
set 
$$X(T_n) = A X(U_n) + B X(V_n), \quad n \geqslant 1,
$$ 
where now $(U_r)$ and $(V_r)$ comprise sequences of independent random variables with respective distribution functions
\begin{eqnarray*} 
P \{U_r \leqslant u| T_r =t\}  &=& (u/t)^{\alpha}, \quad 0 < u < t, \\ 
P \{V_r \leqslant v| T_r =t\}  &=& (v/t)^{\beta}, \quad 0 < v < t.
\end{eqnarray*}
Here $A$ and $B$ are non-zero constants, $\alpha$ and $\beta$ are positive constants.  As in section \ref{sec:simplecont}, we assume that the initial values in the process are fixed at $X(t) = x(t)$ for $0\leqslant t \leqslant \tau$.  

\begin{theorem} \label{thm:10}
Let $m(t) = E X(t) $ then $m(t)$ grows asymptotically as $t^\sigma$ as $t \to \infty$, 
where $\sigma$ is a root of the following equation; in general that root having the larger real part:  
\begin{equation} \label{eq:sigma}
\sigma^2+[\beta(1-B) + \alpha(1-A)]\sigma +[(1-A-B)\alpha\beta] =0.
\end{equation}
\end{theorem}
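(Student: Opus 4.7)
The plan is to proceed by analogy with Theorem~\ref{thm:7}, conditioning on whether the regulating Poisson process jumps in a small interval $(t,t+h)$. For small $h>0$,
$$m(t+h) = (1-h)m(t) + h\,E[A X(U_t) + B X(V_t)] + o(h),$$
where $U_t$ has density $\alpha u^{\alpha-1}/t^{\alpha}$ on $(0,t)$ and $V_t$ has density $\beta v^{\beta-1}/t^{\beta}$ on $(0,t)$. Letting $h\downarrow 0$ gives the integro-differential equation
$$m'(t)+m(t) \;=\; \frac{A\alpha}{t^{\alpha}}\int_0^t u^{\alpha-1}m(u)\,du \;+\; \frac{B\beta}{t^{\beta}}\int_0^t v^{\beta-1}m(v)\,dv.$$

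With this equation in hand, I would substitute the algebraic trial solution $m(t)=t^{\sigma}$. Direct evaluation of the two integrals yields $A\alpha t^{\sigma}/(\alpha+\sigma)+B\beta t^{\sigma}/(\beta+\sigma)$, so after dividing through by $t^{\sigma}$ and discarding the subdominant $\sigma/t$ left over from $m'$, the leading-order balance requires
$$1 \;=\; \frac{A\alpha}{\alpha+\sigma} \;+\; \frac{B\beta}{\beta+\sigma}.$$
Clearing denominators and collecting powers of $\sigma$ reproduces exactly the quadratic \eqref{eq:sigma}.

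To promote this formal ansatz to an asymptotic statement, I would convert the integro-differential equation into a genuine linear ODE. Writing $J_{\gamma}(t)=(C\gamma/t^{\gamma})\int_0^t u^{\gamma-1}m(u)\,du$ for $(C,\gamma)\in\{(A,\alpha),(B,\beta)\}$, a short differentiation yields the identity $(tD+\gamma)J_{\gamma}=C\gamma\,m$ with $D=d/dt$. Applying the commuting Euler-type operators $(tD+\alpha)(tD+\beta)$ to the relation $m'+m=J_{\alpha}+J_{\beta}$ and substituting these identities eliminates the non-local terms, producing a third-order linear ODE in $m$ with polynomial coefficients in $t$. The Birkhoff–Adams–Erdelyi asymptotic machinery already deployed in the proofs of Theorems~\ref{thm:8} and~\ref{thm:9} then applies: trial solutions of the form $t^{\sigma}e^{\delta t}$ force $\delta\in\{0,-1\}$, the $\delta=-1$ mode decays exponentially, and the surviving algebraic mode $t^{\rho}$ has $\rho$ satisfying the same quadratic as above. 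For generic initial conditions the coefficient of the dominant mode is non-zero, so $m(t)$ grows as $t^{\sigma}$ with $\sigma$ the root of \eqref{eq:sigma} of larger real part.

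The main technical obstacle is the bookkeeping required to reduce the pair of coupled non-local terms to a single ODE when $\alpha\neq\beta$; the indicial calculation itself is routine, and the dominance of the root with the larger real part follows from the standard Birkhoff–Erdelyi argument provided the initial data on $[0,\tau]$ avoids the measure-zero locus on which the leading coefficient vanishes—exactly the caveat encoded by the phrase \emph{in general} in the statement.
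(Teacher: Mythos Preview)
Your proposal is correct and follows essentially the same approach as the paper: derive the integro-differential equation by conditioning on the Poisson process, reduce it to a third-order linear ODE with polynomial coefficients (the paper simply says ``differentiating with respect to $t$'' where you use the commuting Euler operators $(tD+\alpha)(tD+\beta)$, but the resulting equation is exactly the paper's \eqref{eq:genmDE}), and then read off the asymptotic exponent via the Erd\'elyi trial-solution method. Your preliminary heuristic of substituting $t^{\sigma}$ directly into the integro-differential equation is a pleasant sanity check not present in the paper, and your explicit identification of the exponential modes $\delta\in\{0,-1\}$ is stated by the paper only implicitly; otherwise the arguments coincide.
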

\begin{proof}
Conditioning on the events of the Poisson process $(N(t))$, we have
$$
m' + m = \frac{A\alpha}{t^{\alpha}} \int_0^t u^{\alpha-1} m(u) du + \frac{B\beta}{t^{\beta}} \int_0^t v^{\beta-1} m(v) dv.
$$
Differentiating with respect to $t$, we obtain, for $t \geqslant \tau$,
\begin{equation}\label{eq:genmDE}
\begin{split}
(1-A-B)\alpha\beta m+ t^2 m''' + [(\alpha+ \beta +1)t + t^2]m''\\ + \{\alpha\beta + [1+\beta(1-B) +\alpha(1-A)]t\} m'=0.
\end{split}
\end{equation} 
Following \citet{Erdelyi}, substituting the usual trial solutions in \eqref{eq:genmDE} and equating coefficients of the highest order terms, we find that  $m(t)$ grows asymptotically as $t^\sigma$, where $\sigma$ is given by \eqref{eq:sigma}. Note that when $\alpha=\beta=1$ and $A=B=1$ then $\sigma=1$, as we know from \ref{meanDEspecial}.
\end{proof}   

We investigate the implications of equation \eqref{eq:sigma} beginning with the question of when the roots are imaginary, corresponding to potentially oscillatory behaviour for $m(t)$.  For brevity of notation, we write $x= 1-A$ and $y=1-B$.  The roots $\sigma_1$ and $\sigma_2$ of \eqref{eq:sigma} are real or imaginary according as the function
$$f(\alpha,\beta,x,y) = \alpha^2x^2 + 2 \alpha\beta x y + \beta^2 y^2 - 4\alpha\beta(x+y-1)
$$
is greater than or equal to, or less than, zero.

We observe that $f=0$ defines a parabola $\mathcal{P}$ in the $(x,y)$ plane for suitable fixed $\alpha$ and $\beta$. Writing $f$ as
$$
f(\alpha,\beta,x,y)= \left[\alpha x + \beta y - \frac{2 \alpha \beta (\alpha + \beta)}{\alpha^2 + \beta^2}\right]^2 - \frac{4 \alpha \beta(\beta-\alpha)}{\alpha^2 + \beta^2}\left[\beta x - \alpha y+ \frac{\alpha^3-\beta^3}{\alpha^2+\beta^2}\right],
$$
we see that, the axis of $\mathcal{P}$ is 
$$\alpha x + \beta y = \frac{2 \alpha \beta(\alpha + \beta )}{\alpha^2+\beta^2},
$$
and the tangent $T$ at the vertex is 
$$\beta x - \alpha y + \frac{\alpha^3 - \beta^3}{\alpha^2 + \beta^2} = 0.
$$
Note that the roots $\sigma_1$ and $\sigma_2$ are complex inside the parabola (with an obvious convention).  If $\alpha > \beta$, then the parabola is above $T$; if $\alpha< \beta$, then $\mathcal{P}$ lies below $T$; if $\alpha= \beta$ then the case is degenerate and $\mathcal{P}$ is the line $x+y=2$ (corresponding to $A+B=0$).  

Now let us consider the points $(1,0)$ and $(0,1)$ with respect to $\mathcal{P}$.  The polar of $(1,0)$, i.e.\ the chord of contact of the tangents from the point $(0,1)$, is
$$ f_1 = \alpha(\alpha x+ \beta y) - 2 \alpha \beta(x+y-1),$$
and the power of $(1,0)$ with respect to $\mathcal{P}$ is $f_{11} = \alpha^2.$  Therefore the tangents to $\mathcal{P}$ meeting at $(1,0)$ are given by the line pair
$$
0 = f_1^2 - f f_{11} = \alpha^2 (\beta- \alpha) (x-1)(x+y-1).
$$
Likewise, the tangents to $\mathcal{P}$ from $(0,1)$ are the line pair $(y-1)(x+y-1) = 0$.  
From an early result attributed to \citet{Lambert} we know that the locus of the focus of parabolas with three specified tangent lines is the circle though the vertices of the triangle formed by the intersections of the lines; in this case the points $(0,1)$, $(1,0)$ and $(1,1)$.  
\begin{figure}[h]
	\centering
		\includegraphics[width=0.7\textwidth,trim=2cm 7cm 1cm 7cm]{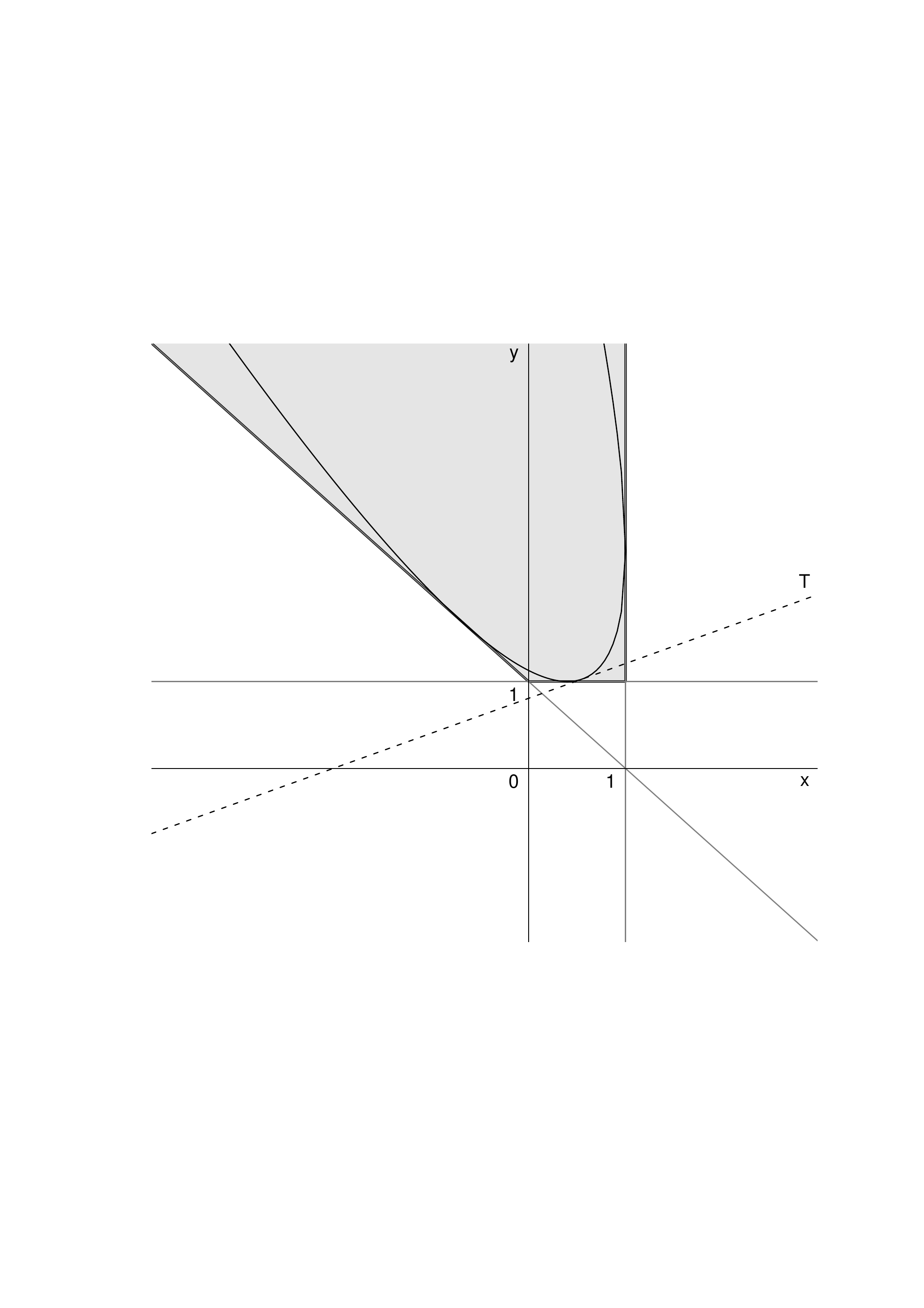}
	\caption{Region of oscillatory behaviour (shaded) in the case $\alpha > \beta$. When $\alpha < \beta$ the oscillatory region is given bv by reflection in the line $y=x$. \label{fig:4}  }
\end{figure}
As $\alpha$ and $\beta$ run over all positive values, the three points of contact with $x=1$, $y=1$, and $x+y=1$, are seen to trace all points of these lines except those that lie in the region $\{x < 1\} \cap \{y <1\}$.  Thus these lines delineate the envelope of the parabolic region in which $m(t)$ is oscillatory; see figure \ref{fig:4}.

Secondly, we consider whether $\sigma_1$ or $\sigma_2$ has positive real part (corresponding to potentially unbounded solutions for $m(t)$).  If the roots are imaginary, $(\alpha,\beta)$ lying inside $\mathcal{P}$, then (being conjugate) they have a positive real part if $\alpha x + \beta y < 0.$
If the roots are real, then at least one is positive if either $\alpha x + \beta y <0$, or $x+y<1$.  The nature of the asymptotic behaviour of $m(t)$ as $t \to \infty$ is thus given in terms of the parameters $x$ and $y$;  see figure \ref{fig:5}.
\begin{figure}[h]
	\centering
		\includegraphics[width=0.7\textwidth,trim=2cm 7cm 1cm 7cm]{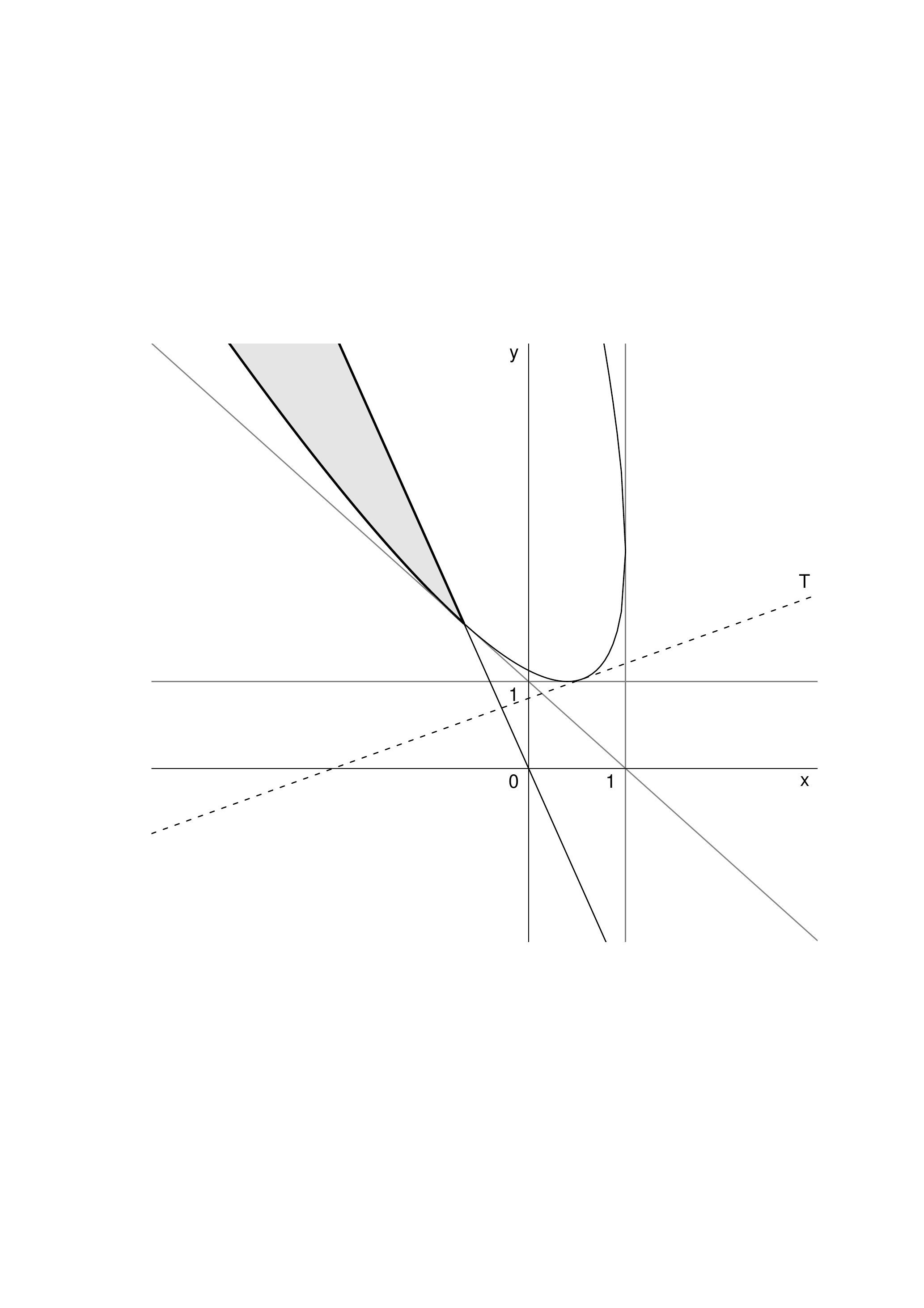}
	\caption{Region in which the roots of \eqref{eq:sigma} are complex with positive real part (shaded).   \label{fig:5}  }
\end{figure}
Alternatively, we may regard $A$ and $B$ and hence $x$ and $y$ fixed, and consider the quadratic form in $\alpha$ and $\beta$ given by 
$$\phi = \alpha^2 x^2 + 2 \alpha \beta (xy-2x -2y +2) +\beta^2 y^2.
$$
A necessary condition for this to take negative values is that it should be a real line pair, for which a necessary and condition is that $(xy-2 x - 2 y +2)^2 > x^2 y^2$ which is equivalent to $(x-1)(y-1)(x+y-1) < 0$.  Note that the two regions of oscillatory behaviour in figure \ref{fig:5} do indeed satisfy this constraint.  The oscillatory region in the $(\alpha,\beta)$ plane then comprises those opposite angles lying between the line pairs in which $\phi$ is negative.  In the case when $x<1$, $y<1$ and $x+y<1$, no part of this region lies in $\{\alpha >0\} \cap\{\beta >0\}$, so there are no oscillatory solutions there.  

Of course, we may also seek a solution of \eqref{eq:genmDE} as a power series in $t$.  In the usual way, the indicial equation is found to be $c(c-\alpha+1)(c-\beta+1)= 0 $. which supplies the required three linearly independent solutions in the ordinary case when $\alpha$ and $\beta$ are neither equal nor differ by an integer.  In these cases the method of Frobenius may generally be employed to yield the required distinct solution in series.  We refrain from an extended discussion.  However we do mention the special case when the boundary condition is $\tau=0$ with $x(0)=1$. In this instance, in general, the power series corresponding to $c=0$, with the form $m(t) = 1 + \sum_{r=1}^\infty a_r t^r $,  supplies the solution that is regular at the origin.  For example, if $A=B=1$ then it is seen that
$m(t) \sim t^\sigma$ with $\sigma = [\alpha\beta]^{1/2}.$  If it is further assumed that $\alpha\beta = 4$ where neither $\alpha$ nor $\beta$ is an integer, then \eqref{eq:genmDE} has the solution $m(t) = 1+ t + 3 t^2/[2(\alpha+\beta) +12],$ by inspection.  By the remarks above, this is the required $m(t)$ satisfying the boundary conditions $m(0) = m'(0) =1$ and is such that $m(t)$ grows quadratically as $t \to \infty$.
\subsection{The second moment in the generalized case}
In considering the second moment $q(t) = E X^2(t)$ of the process $X(t)$ of definition $5.1$, we will make the assumption that $\alpha=\beta > 0$, thus excluding the oscillatory behaviour.  We have this:
\begin{theorem} \label{thm:11} 
As $t \to \infty$, $q(t) \sim K t^\sigma$ where 
$$ \sigma = \alpha \max \{A^2 +B^2 -1, 2(A+B-1)\},$$
and $K$ is a constant depending on $A$, $B$ and $\alpha$ and initial conditions.
\end{theorem}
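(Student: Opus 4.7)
The approach mirrors that of Theorem \ref{thm:8}, culminating in the Erdélyi asymptotic procedure used there and in Theorem \ref{thm:10}. I would first derive the governing equations by conditioning on events of $(N(t))$ in $(t,t+h)$. Because $\alpha=\beta$, the conditional densities $\alpha u^{\alpha-1}/t^\alpha$ of $U_r$ and $V_r$ coincide, and so for $t\geqslant\tau$,
\begin{equation*}
 q'(t)+q(t)=(A^2+B^2)\,\frac{\alpha}{t^\alpha}\int_0^t u^{\alpha-1}q(u)\,du+2AB\,\frac{\alpha^2}{t^{2\alpha}}\iint_{[0,t]^2}(uv)^{\alpha-1}c(u,v)\,du\,dv,
\end{equation*}
where $c(s,t)=E\{X(s)X(t)\}$. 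Analogous conditioning for $c(s,t)$ with $s$ fixed and $t$ varying gives $\partial_t c+c=(A+B)\alpha t^{-\alpha}\int_0^t y^{\alpha-1}c(s,y)\,dy$, which after one further differentiation reduces to $t\,\partial_t^2 c+(t+\alpha)\partial_t c-\alpha(A+B-1)\,c=0$. The trial $c(s,t)\sim A(s)t^\mu$ identifies the leading exponent $\mu=\alpha(A+B-1)$, consistent with Theorem \ref{thm:10}.

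Next, I would introduce the auxiliary functions
\begin{equation*}
 P(t)=\frac{\alpha}{t^\alpha}\int_0^t u^{\alpha-1}q(u)\,du,\qquad Q(t)=\iint(uv)^{\alpha-1}c(u,v)\,du\,dv,\qquad J(t)=\int_0^t u^{\alpha-1}c(u,t)\,du,
\end{equation*}
which yield the closed linear system $q'+q=(A^2+B^2)P+2AB\alpha^2Q/t^{2\alpha}$, $tP'+\alpha P=\alpha q$, $Q'=2t^{\alpha-1}J$, and $J'+J=t^{\alpha-1}q+(A+B)\alpha t^{-\alpha}Q$. Differentiating and eliminating $P$, $Q$ and $J$ in succession produces a single fourth-order linear ODE for $q$ with polynomial coefficients in $t$, analogous in structure to \eqref{eq:contqDE}.

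Following \citet{Erdelyi}, I would then substitute the trial forms $q=t^\sigma e^{\delta t}$ and $q=t^\rho$ and extract the indicial relations from the leading coefficients. The exponential trial returns only $\delta\in\{0,-1\}$, ruling out exponential growth. For the polynomial trial $q\sim Kt^\rho$, balancing $q$ against $(A^2+B^2)P\sim(A^2+B^2)\alpha Kt^\rho/(\alpha+\rho)$ gives one candidate exponent $\rho_1=\alpha(A^2+B^2-1)$, while the cross term, governed by the off-diagonal asymptotic $c(u,v)\sim\mathrm{const}\cdot(uv)^\mu$ established in Step 1, grows like $t^{2\mu}$ and supplies the second candidate $\rho_2=2\mu=2\alpha(A+B-1)$. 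The remaining roots of the indicial polynomial are strictly subdominant. Hence $q(t)\sim Kt^\sigma$ with $\sigma=\max(\rho_1,\rho_2)=\alpha\max\{A^2+B^2-1,\,2(A+B-1)\}$, the two regimes being separated by the circle $(A-1)^2+(B-1)^2=1$.

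The principal obstacle is the elimination of Step 2: reducing the coupled linear system to a single fourth-order ODE with polynomial coefficients is of the same character as the derivation of \eqref{eq:contqDE} but with the extra parameters $A$, $B$ and $\alpha$, and the bookkeeping is correspondingly more intricate. A secondary subtlety is to verify that, in the regime where $\rho_2\geqslant\rho_1$, the coefficient of $t^{\rho_2}$ in the general solution is genuinely nonzero, so that the asserted rate is attained rather than merely an upper bound; this follows from the Jensen inequality $(E\{X(t)\})^2\leqslant q(t)$ together with Theorem \ref{thm:10}, since the left-hand side already grows as $t^{2\alpha(A+B-1)}=t^{\rho_2}$.
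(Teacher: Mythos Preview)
Your approach is essentially the paper's: derive the integro-differential system for $q$, $Q$ and the auxiliary integrals, reduce to a single fourth-order linear ODE in $q$, and apply the Erd\'elyi trial-solution procedure. The paper in fact carries out the elimination you flag as the ``principal obstacle'' and displays the resulting fourth-order equation explicitly; substituting $q\sim t^\sigma$ then yields the indicial quadratic
\[
[\sigma+2\alpha(1-C_1)][\sigma+\alpha(1-C_2)]=0,\qquad C_1=A+B,\ C_2=A^2+B^2,
\]
from which the two candidate exponents drop out directly.

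Where your sketch diverges is in bypassing that computation with the balancing heuristic. The balance $q\sim (A^2+B^2)P$ correctly produces $\rho_1$, but your second balance rests on the claim ``$c(u,v)\sim\mathrm{const}\cdot(uv)^\mu$ established in Step~1''. Step~1 does not establish this: from $t\,\partial_t^2c+(t+\alpha)\partial_t c-\alpha(A+B-1)c=0$ you only get $c(s,t)\sim A(s)\,t^\mu$ for $t\to\infty$ with $s$ fixed, and nothing is said about the $s$-dependence of $A(s)$. The product form is an extra ansatz, and without it the asymptotics of $Q(t)=\iint(uv)^{\alpha-1}c(u,v)\,du\,dv$ are not pinned down, so the identification of $\rho_2$ is not justified. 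Likewise, the assertion that ``the remaining roots of the indicial polynomial are strictly subdominant'' cannot be checked unless you actually have the indicial polynomial in hand. The paper avoids both issues by computing the ODE and its indicial equation explicitly; your Jensen argument for non-degeneracy in the $\rho_2\geqslant\rho_1$ regime is a neat addition that the paper does not make.
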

\begin{proof}
Let $C_1=A+B$ and $C_2= A^2+B^2$, then by conditioning on the events of the Poisson process during $(t,t+h)$, we find in the usual way that
\begin{equation}\label{eq:genQdot}
q'+q = \frac{\alpha C_2}{t^{\alpha}} \int_0^t u^{\alpha-1} q(u) du + \frac{2 A B \alpha^2 Q}{t^{2\alpha}},
\end{equation}
where $Q = \int_0^t\int_0^t (uv)^{\alpha-1} c(u,v) du dv$ and $c(u,v) = E\{ X(u) X(v)\}$. Likewise 
$$ \frac{\partial c(u,t)}{\partial t} + c(u,t) = \frac{\alpha C_1}{t^{\alpha}} \int_0^t y ^{\alpha-1} c(u,y) dy, \quad u<t,
$$
with a similar equation for $\partial c(t,v)/\partial t$, when $v<t$.  Differentiating $Q$ we find
$$ \frac{d}{dt} \left(t^{-\alpha+1} \frac{dQ}{dt}\right) + t^{-\alpha+1}\frac{dQ}{dt} - 2t^{\alpha-1} q = 2\alpha C_1 t^{-\alpha} Q,
$$ where we have substituted for $\partial c(u,t) /\partial t$ and $\partial c(t,v) /\partial t$, as necessary.  Substituting for $Q$ throughout, using \eqref{eq:genQdot}, we have this equation for $q(t)$:
\begin{equation}
\begin{split}
t^3 q^{(\text{iv})} +2 (t+2\alpha +1) t^2q'''
+\{t^3+[(7-2 C_1-C_2)\alpha+3] t^2+(5\alpha^2+\alpha)t\}q''\\
+ \{[(3-2 C_1-C_2)\alpha+1] t^2+[(7-4 A B-2 C_1-3 C_2)\alpha^2+\alpha]t + 2\alpha^3 -2\alpha^2\}q'\\
+ [2(C_2-1)(C_1-1)t\alpha^2-2\alpha^2(\alpha-1)(2AB+C_2-1)]q = 0.
\end{split}
\end{equation}
\begin{figure}[ht]
	\centering
		\includegraphics[width=0.7\textwidth,trim=2cm 7cm 1cm 7cm]{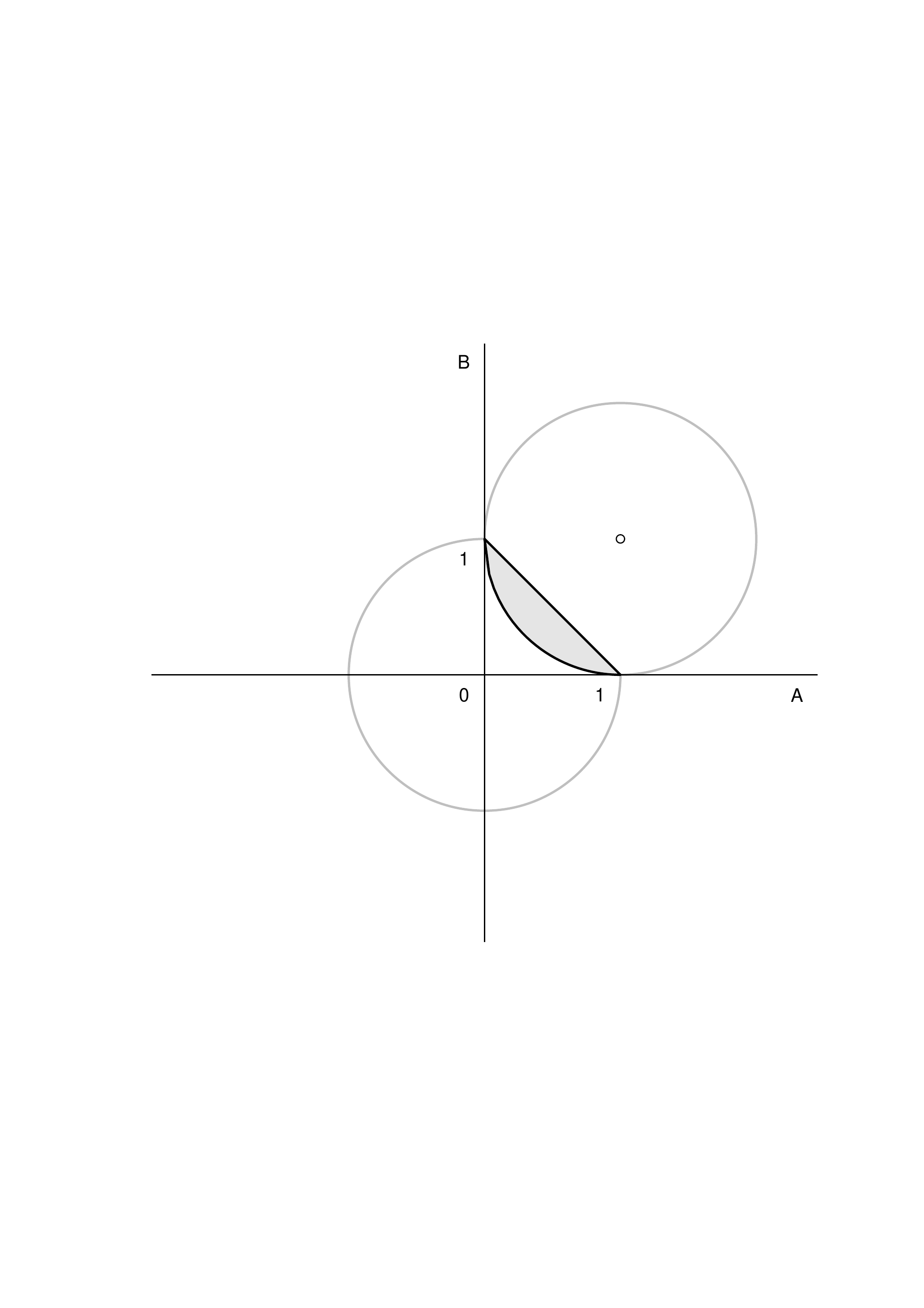}
	\caption{Outside the circle centred at $(1,1)$, the second moment increases or decreases as $t^{\sigma_1}$ depending on the sign of $\sigma_1 = \alpha(A^2+B^2 -1)$   Within this circle it grows as $t^{\sigma_2}$, where $\sigma_2 = 2\alpha(A+B-1)$; decreasing in the shaded region and otherwise increasing.  The Ulam case is at the point $\scriptstyle{\circ}$. \label{fig:6}  }
\end{figure}
Again following \citet{Erdelyi}, we find $q(t)$ grows as $t^\sigma$ for large $t$, where $\sigma$ is given by
$$\sigma(\sigma-1) + [(\alpha+1) + \alpha(2-C_2-2C_1)]\sigma + 2\alpha^2(1-C_1)(1-C_2) = 0.
$$
This factorises into
$$[\sigma + 2\alpha(1-C_1)][\sigma + \alpha(1-C_2)] =0,
$$
giving the two roots as claimed. The leading term is therefore $t^\sigma$ with $\sigma = \alpha(A^2+B^2 -1)$ everywhere in the $(A,B)$ plane, except inside the circle $(A-1)^2 + (B-1)^2=1$.  This is illustrated in figure \ref{fig:6}. Numerical solutions of the differential equations, for various initial conditions and parameter values, demonstrate exactly the behaviour described theorems \ref{thm:10} and \ref{thm:11} 
\end{proof}
Finally, we briefly discuss the effects on $(X(t))$ if at each jump $X(T_r) = A_r X(U_r) + B_r X(V_r)$ where now $(A_r)$ and $(B_r)$ comprise sequences of independent random variables, also independent of $(U_r, V_r)$, with means $\mu_A= E A_r$ and $\mu_B = E B_r$ respectively.  It is easy to see that in \eqref{eq:sigma} and \eqref{eq:genmDE}, one simply replaces $A$ and $B$ by $\mu_A$ and $\mu_B$.  The essential conclusions in figure \ref{fig:6} are the same, with some relabelling.  For the second moment, we note that the product moment $E (AB)$ is irrelevant to first order.  The end result is that $q(t)$ grows with $t^\sigma$ where now $\sigma= \alpha\max\{2(\mu_A+\mu_B-1), E A^2 +  E B^2-1\}$.   The nature of the final figure will then be similar, but dependent on the actual distributions of $A$ and $B$, as expressed in their first two moments.

\subsection{Generalized adding processes in discrete time} Of course, one can also define such generalized adding processes in discrete time, but we avoid discussing these in detail.  We strongly conjecture that they will show essentially the same behaviour as continuous-time generalized processes, and we sketch one example to illustrate this. In the usual way, in the notation of \eqref{eq:BSU} and lemma \ref{thm:martingale}, let
$X_{n+1} = X_{U(n)} + 2 X_{V (n)}$, $n \geqslant 2$;  and define  $M_n = S_n/(n(n + 1)(n + 2))$ , where $S_n = \sum_{k=1}^n X_k$, as usual.
Then $(M_n; n \geqslant r)$ is a martingale with respect to the increasing sequence of $\sigma$-fields $(F_n; n \geqslant r)$ generated by the sequence $(X_n)$, or equivalently by $(S_n)$.

The convergence of this martingale, which we refrain from proving, shows that $E X_n$ grows asymptotically like $n^2$ ; and we note that this is entirely consistent with the result \eqref{eq:sigma} of theorem \ref{thm:10}, in the case when $A=1, B=2$, and $\alpha = \beta = 1$.   Clearly numerous other similar martingales can be recruited to consider the behaviour of $X_n$ and $S_n$ in more general cases. 

As an illustration, using the same notation, with the recurrence $X_{n+1} = A X_{U(n)} + B X_{V(n)}$, $n \geqslant 2;$ for suitable constants $A$ and $B$, we find that 
$M_n = [(n - 1)!/(n + A + B - 1)!]\,S_n$ 
satisfies the martingale condition wrt $(F_n)$; with the usual falling factorial convention for $(n+c)!$, for $n+c$ not an integer.   And then $M_n$ is a martingale for those values of $A$ and $B$ such that $E |M_n|$ is finite.  

The convergence of this martingale, whose proof we omit, entails the convergence of $S_n/n^{A+B}$  to some r.v. as $n \to \infty$, using the
Stirling-DeMoivre formula for large $n$. From which one may deduce that $E X_n$ grows like $n^{A+B-1}$, in agreement with the continuous time results.

The determination of the implicit constraints on $A$ and $B$, analogous to those given by theorems \ref{thm:10} and \ref{thm:11}, is an open problem. 

\section{Conclusion}
We have considered Ulam's random adding process, introduced in \citet{BSU}, and verified the authors' conjecture about the quadratic growth of the process's second moment.  We have also introduced a number of new, more general random adding processes, in both discrete and continuous time, showing that their moments exhibit similar behaviour.   Furthermore, for the basic simple Ulam process of section \ref{sec:2}, we showed that $M_n = S_n/(n(n+1))$ converges almost surely and in mean-square to a limiting random variable $M$. The result depended crucially on the identification of a martingale. Related martingales were also identified for the $p$-adding and continuous-time processes, which established the a.s. and m.s. convergence of $S_n/n^2$ and $S(t)/t^2$ respectively, leading to similar conclusions about the behaviour of their sample paths. We have been unable to find suitable martingales for the generalized random adding processes of section 5, though it seems likely that similar convergence results will apply. A possible approach is to establish mean-square convergence by showing that the random sequence is Cauchy in mean-square. Our preliminary investigations suggest that limit results of the types given in theorems \ref{thm:5} and \ref{thm:9} are not precise enough for this purpose and that higher order approximations will be necessary. Finally, we note that there are many further obvious and interesting open problems about almost every aspect of this largely unexplored family of random processes.

\paragraph{Remark:} The result \eqref{eq:qlimit} in the special case $r=1=x_r$, was obtained but not published by \citet{Turner}, while working with Mark Kac who analysed another of Ulam's history dependent recurrences \citep{Kac}.
\newpage
\bibliographystyle{apalike}

\vspace{5ex}
\hrule
\vspace{1ex}
\urlstyle{sf}
Email address: {\sf peter.clifford@jesus.ox.ac.uk}\\
URL: \url{https://www.stats.ox.ac.uk/~clifford}

Email address: {\sf david.stirzaker@sjc.ox.ac.uk}\\
URL: \url{https://www.sjc.ox.ac.uk/discover/people/professor-david-stirzaker}

\vspace{2ex}
\hrule

\end{document}